\documentclass{article}
\usepackage[english]{babel}
\usepackage{amsmath,amsthm,amssymb ,latexsym, enumerate, hyperref, color, makeidx, xcolor, graphicx, amsmath, amsfonts, amsthm,amscd, amsmath, amssymb, amstext}
\allowdisplaybreaks
\newtheorem{theorem}{Theorem}
\newtheorem{corollary}[theorem]{Corollary}
\newtheorem{definition}[theorem]{Definition}
\newtheorem{example}[theorem]{Example}
\newtheorem{lemma}[theorem]{Lemma}

\newtheorem{proposition}[theorem]{Proposition}
\newtheorem{remark}[theorem]{Remark}

\def\supp{\mathop{\rm supp}\nolimits}

\begin{document}

\title{Idempotent approach to level-2 variational principles in Thermodynamical Formalism}

\author{ A. O. Lopes, J. K. Mengue and E. R. Oliveira}

\maketitle

\begin{abstract}
In this work we introduce an idempotent pressure to level-2 functions and its associated density entropy.   {All this is related to idempotent pressure functions which is the natural concept that corresponds to the meaning of probability in the level-2 max-plus context.} In this general framework the equilibrium states, maximizing the variational principle, are not unique. We investigate the connections with the general convex pressure introduced recently to level-1 functions by  Bi\'{s}, Carvalho, Mendes and Varandas. Our general setting contemplates the dynamical and not dynamical framework. We also study a characterization of the density entropy in order to get an idempotent pressure invariant by dynamical systems acting on probabilities;  this is therefore a level-2 result. We are able to produce idempotent pressure functions at level-2 which are invariant by the dynamics of the  pushforward map via a form of Ruelle operator.
\end{abstract}

\section{Introduction}\label{Idempotent  pressure functions}

In this paper we aim to define an idempotent analysis approach to level-2 variational principles in Thermodynamical Formalism.

The use of the Idempotent Analysis is a reasonable choice, because  as will see it encloses many of the classical constructions of thermodynamical formalism. Also, entropy and pressure are variational  concepts  which can be modeled in an idempotent framework, in the way that its properties will derive from the fundamental theorems of the Idempotent Analysis. The study of optimization problems was the motivation to the introduction of the Idempotent Analysis by Maslov in \cite{KM89}.

In \cite{MO1},  via the idempotent framework, the authors  addressed the issue of  idempotent measures  for place dependent idempotent iterated function systems. As a tool, a representation for idempotent probabilities on compact metric spaces is proved there and we will use it here. It is worth to notice that for bounded maps on separable locally compact topological spaces taking value in some semiring, tending to infinite at infinite and with compact support, the original Maslov's work \cite{KM89} considered linear functionals acting on that functions with image in a semiring. The setting and proofs  in \cite{MO1} are  somehow different from \cite{KM89}.

Among other things we will show that adapting the idempotent formalism of \cite{MO1} for the level-2 setting, we can use  idempotent probabilities to model variational principles associated to the nonlinear thermodynamic formalism framework. A level-1 property is related to points in a compact metric space $X$ and the variational principles are defined for continuous functions $\varphi: X \to \mathbb{R}$.  On the other hand, a level-2 property is related to points in $\mathcal{P}(X)$ (the space of probabilities on $X$ equipped with the weak$^*$ topology) and the variational principles are defined for continuous functions $g: \mathcal{P}(X) \to \mathbb{R}$. A simple example of continuous function $g: \mathcal{P}(X) \to \mathbb{R}$ is
$$g_{A}(\mu)=\int_{X} A d\mu, \, \\, \text{where}\, A:X \to \mathbb{R}\,\text{is continuous}.$$

In ergodic theory, questions at level-2 refer to properties related to the global study of the set of different probabilities on a given compact metric space $X$. For example, in \cite{L3} the author study large deviations in the set of probabilities over the symbolic space with a finite number of symbols, and minus entropy plays the role of a deviation function, while in \cite{LO1}, the authors study thermodynamic formalism, when the dynamics is given by the push-forward map acting in  $\mathcal{P}(X)$;  a form of   Ruelle operator is introduced and a kind of entropy was defined. In \eqref{prew} we present an operator that can  in some sense corresponds to the dual of the Ruelle operator on the ifs  level-2 setting. Other results related to the dynamics of the push-forward map appear in \cite{Fagner}, \cite{Bauer} and \cite{BVerm}.

 The thermodynamic formalism which we develop is closely related to the study of the Curie-Weiss model, which is of great importance in Statistical Mechanics, see \cite{FV}, \cite{Barre}, \cite{BKL}, \cite{LeWa1} and \cite{LeWa2}.

Basic properties on the Max-Plus algebra can be founded in chapter 6 in \cite{BLL} and \cite{Kol01} (see also Section 5.2 in \cite{G1} ).
The idempotent mathematics utilizes the idempotent semiring $\mathbb{R}_{\max}=\mathbb{R}\cup \{-\infty\}$ endowed with the operations $\oplus:=\max$ and $\odot:=+$. Note that $r \oplus r = \max(r, r)= r, \forall r \in \mathbb{R}_{\max}$ and  the neutral elements for $\oplus$ and $\odot$ are, respectively, $-\infty$ and $0$. Also, $\mathbb{R}_{\max}$  is not a ring since, for example, $2 \oplus r = -\infty$ has no solution with respect to $r$, so there is no symmetrical element ``-2".

Given a compact metric space $(X,d)$,  the space $\mathcal{P}(X)$ of probabilities on the Borel sigma-algebra of $X$ is a compact space with respect to the weak$^*$ topology. One can metrize such topology as in \cite{Villa}, by choosing the Wasserstein-1 (or Monge-Kantorovich) metric
\begin{equation} \label{WW1} W_{1}(\mu,\nu)= \max_{\operatorname{Lip}(f) \leq 1} \mu(f)-\nu(f).
\end{equation}
Indeed, it is widely known that $(\mathcal{P}(X), W_{1})$ is a compact metric space, as a consequence of the Banach-Alaoglu theorem.

We denote by $C(X,\mathbb{R})$ the space of continuous functions from $X$ to $\mathbb{R}$ and by $C(\mathcal{P}(X),\mathbb{R})$ the space of continuous functions on $\mathcal{P}(X)$ taking values at $\mathbb{R}$ (instead  $\mathbb{R}_{\max}$).

\begin{definition} \label{def :max-plus operations}
	Let $\mathbb{Y}=C(\mathcal{P}(X), \mathbb{R})$ be the space of continuous functions on $\mathcal{P}(X)$.  Let us define the max-plus linear operations
\begin{itemize}
  \item $\bigoplus: \mathbb{Y} \times \mathbb{Y} \to \mathbb{Y}$ given by $(g \bigoplus g')(\mu) = g(\mu) \oplus g'(\mu)$;
  \item $ \bigodot:  \mathbb{R} \times \mathbb{Y} \to \mathbb{Y}$ given by $(c \bigodot g)(\mu) = c \odot g(\mu)$,
\end{itemize}
for $c \in \mathbb{R}$ and $g,g' \in \mathbb{Y}$.
\end{definition}

\begin{definition}\label{def: variation metric}
An (level-2) \textbf{idempotent  pressure function}, is a max-plus linear functional $\ell: C(\mathcal{P}(X), \mathbb{R}) \to \mathbb{R}$, that is, the following axioms are fulfilled for any $c \in \mathbb{R}$ and $g,g' \in C(\mathcal{P}(X), \mathbb{R})$
\begin{itemize}
  \item Axiom A1 \begin{equation}\label{eq:A2}
              \ell(c \bigodot g)=c \odot \ell(g)
            \end{equation}
  \item Axiom A2 \begin{equation}\label{eq:A3}
              \ell(g\bigoplus g')=\ell(g) \oplus \ell(g').
            \end{equation}
\end{itemize}
\end{definition}

In the following theorem we present a paramount result characterizing  idempotent pressure functions, which is a max-plus analogous of the Riesz representation theorem.

\begin{theorem} \label{thm: represetation variation} Let $(X,d)$ be a compact metric space and $\mathcal{P}(X)$ be the set of probabilities over the Borel sigma algebra. Consider $\mathcal{P}(X)$ as a metric space with any metric equivalent to the weak-* topology.

If  $\ell: C(\mathcal{P}(X), \mathbb{R})\to\mathbb{R}$ is an  idempotent pressure function, then there exists a unique upper semi-continuous (u.s.c.) function $h: \mathcal{P}(X) \to \mathbb{R}_{\max}$ such that
\begin{equation}\label{eq:represent variation}
  \ell(g)= \sup_{\mu \in \mathcal{P}(X)} [ g(\mu)+h(\mu)],
\end{equation}
for any $g\in C(\mathcal{P}(X), \mathbb{R})$. Reciprocally, if $h:\mathcal{P}(X) \to \mathbb{R}_{\max}$ is bounded above and it is not identically $-\infty$ then equation \eqref{eq:represent variation} defines an idempotent pressure function.
\end{theorem}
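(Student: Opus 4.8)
The plan is to treat the two implications separately, handling the easy reciprocal direction first and then the representation itself, which is the max-plus analogue of the existence-and-uniqueness part of the Riesz theorem.

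For the reciprocal direction, assume $h$ is bounded above and not identically $-\infty$, and set $\ell(g)=\sup_{\mu}[g(\mu)+h(\mu)]$. First I would check that $\ell$ is real-valued: compactness of $\mathcal{P}(X)$ makes each $g$ bounded, so $g+h$ is bounded above and the supremum is finite, while a point where $h$ is finite keeps it above $-\infty$. Axiom A1 \eqref{eq:A2} is then immediate from $\sup_\mu[c+g(\mu)+h(\mu)]=c+\sup_\mu[g(\mu)+h(\mu)]$, and Axiom A2 \eqref{eq:A3} from the elementary identity $\sup_\mu\max(a(\mu),b(\mu))=\max(\sup_\mu a,\sup_\mu b)$ applied to $a=g+h$ and $b=g'+h$.

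For the forward direction I would build the candidate from $\ell$ by the max-plus Fenchel transform, setting
\[
  h(\mu):=\inf_{g\in C(\mathcal{P}(X),\mathbb{R})}\bigl[\,\ell(g)-g(\mu)\,\bigr].
\]
Taking $g\equiv 0$ and using A1 shows $h(\mu)\le \ell(0)<\infty$, so $h$ is bounded above, and being a pointwise infimum of the continuous maps $\mu\mapsto \ell(g)-g(\mu)$ it is automatically upper semi-continuous. The inequality $\ell(g)\ge\sup_\mu[g(\mu)+h(\mu)]$ is immediate, since $h(\mu)\le \ell(g)-g(\mu)$ for every $g$ and $\mu$ by definition.

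The substance is the reverse inequality $\ell(g_0)\le \sup_\mu[g_0(\mu)+h(\mu)]$, and this is where I expect the real work. Note first that A2 forces monotonicity: $g\le g'$ gives $g\bigoplus g'=g'$, hence $\ell(g)\le\ell(g')$. Arguing by contradiction, suppose $\sup_\mu[g_0(\mu)+h(\mu)]\le \ell(g_0)-3\epsilon$. For each $\mu$ the definition of $h$ yields $g_\mu$ with $\ell(g_\mu)-g_\mu(\mu)<\ell(g_0)-2\epsilon-g_0(\mu)$; replacing $g_\mu$ by $\psi_\mu:=g_\mu-g_\mu(\mu)+g_0(\mu)$ (a constant shift, harmless under A1) gives $\psi_\mu(\mu)=g_0(\mu)$ and $\ell(\psi_\mu)<\ell(g_0)-2\epsilon$. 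Continuity of $\psi_\mu$ and $g_0$ provides a neighbourhood $U_\mu$ of $\mu$ on which $g_0<\psi_\mu+\epsilon$; compactness of $(\mathcal{P}(X),W_{1})$ extracts a finite subcover $U_{\mu_1},\dots,U_{\mu_n}$, whence $g_0\le\bigl(\bigoplus_i\psi_{\mu_i}\bigr)+\epsilon$ pointwise. Applying monotonicity, A1, and A2 then yields $\ell(g_0)\le \max_i\ell(\psi_{\mu_i})+\epsilon<\ell(g_0)-\epsilon$, the desired contradiction. The key difficulty is precisely the passage from the local, pointwise data encoded by $h$ to a global statement, which the finite subcover together with the max-plus linearity (A2 turning $\bigoplus$ into $\max$) is designed to bridge.

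Finally, upper semi-continuity of $h$ was already noted as an infimum of continuous functions. For uniqueness I would use concentrated test functions: for fixed $\mu_0$ set $g_K(\cdot):=-K\,W_{1}(\cdot,\mu_0)$, which is Lipschitz, hence continuous. A Moreau--Yosida type localization shows $\ell(g_K)=\sup_\nu[h(\nu)-K\,W_{1}(\nu,\mu_0)]\to h(\mu_0)$ as $K\to\infty$ for any upper semi-continuous bounded-above representative $h$ (using upper semi-continuity to confine the supremum to a shrinking ball around $\mu_0$, with the $\pm\infty$ cases handled the same way). Since $\ell(g_K)$ depends only on $\ell$, any two upper semi-continuous representatives must agree at $\mu_0$, giving uniqueness and incidentally the formula $h(\mu_0)=\lim_{K\to\infty}\ell\bigl(-K\,W_{1}(\cdot,\mu_0)\bigr)$.
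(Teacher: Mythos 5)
Your proof is correct, but it takes a genuinely different route from the paper. The paper's own proof of Theorem~\ref{thm: represetation variation} is a reduction: since $\mathcal{P}(X)$ is itself a compact metric space, an idempotent pressure function is literally an idempotent (Maslov) measure on $Z=\mathcal{P}(X)$, so existence and uniqueness of the u.s.c.\ density follow by quoting Theorem~\ref{thm:represent idepotent} (Theorem 1.2 of \cite{MO1}); only the reciprocal direction is checked by hand, exactly as you do. You instead prove the representation from scratch: you define $h$ as the max-plus Fenchel transform $h(\mu)=\inf_{g}[\ell(g)-g(\mu)]$, derive monotonicity of $\ell$ from Axiom A2, obtain the hard inequality $\ell(g_0)\le\sup_\mu[g_0(\mu)+h(\mu)]$ by a finite-subcover argument that converts the pointwise data $\psi_{\mu_i}$ into the global bound $g_0\le\bigl(\bigoplus_i\psi_{\mu_i}\bigr)+\epsilon$, and prove uniqueness by localization with the cone-like test functions $-K\,W_{1}(\cdot,\mu_0)$; all steps check out. (Two cosmetic points: in the uniqueness step the bounded-above hypothesis on a representative is automatic, since taking $g\equiv 0$ in \eqref{eq:represent variation} forces $\sup_\nu h(\nu)=\ell(0)<\infty$; and your $h$ cannot be identically $-\infty$ because $\ell$ is real-valued.) What the paper's route buys is brevity and the point it wants to emphasize, namely that level-2 thermodynamic formalism sits inside general idempotent analysis. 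What your route buys is a self-contained argument independent of \cite{MO1} and \cite{BRZ}, plus two explicit formulas of independent interest: $h_\ell(\mu)=\inf_{g\in C(\mathcal{P}(X),\mathbb{R})}[\ell(g)-g(\mu)]$, which shows that the level-2 Legendre-type identity \eqref{fu1} of Proposition~\ref{kul} holds with no concavity hypothesis (concavity is only needed there to compare with the level-1 entropy $\mathfrak{h}$, whose infimum runs over the smaller class $j(C(X,\mathbb{R}))$), and $h_\ell(\mu_0)=\lim_{K\to\infty}\ell\bigl(-K\,W_{1}(\cdot,\mu_0)\bigr)$, which recovers the density entropy directly from $\ell$.
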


With the introduction of above result the following definition is natural.

\begin{definition}
	Let $(X,d)$ be a compact metric space, $\ell: C(\mathcal{P}(X), \mathbb{R}) \to \mathbb{R}$ be an idempotent pressure and $h_{\ell}: \mathcal{P}(X) \to \mathbb{R}$ be the unique u.s.c. function satisfying \eqref{eq:represent variation}.
	We say that $h_{\ell}$ is the \textbf{density entropy} associated to the idempotent pressure function $\ell$.
	Moreover,  given $\ell$, we call any probability $\nu \in \mathcal{P}(X)$ attaining the supremum, that is,
	\begin{equation}\label{eq:generalized max-plus equilibrium}
		P_{\ell}(g)= h_{\ell}(\nu) + g(\nu),
	\end{equation}
	an \textbf{equilibrium state} associated to the idempotent pressure function $\ell$.
\end{definition}

We remark that the existence of equilibrium states associated to the idempotent pressure function $\ell$ is a consequence of $\mathcal{P}(X)$ to be compact and the map $\mu \mapsto [g(\mu)+h_{\ell}(\mu)]$ to be u.s.c.

In \cite{Bis} the authors consider a generalization of thermodynamic formalism via concave analysis; not exactly aimed to analyze level-2 questions.
We will explain the connection between the setting in \cite{Bis} and  the present level-2 setting  in Section  \ref{sec : level1}.
In section \ref{sec: proof1}, which consider idempotent measures in quite general terms  we prove Theorem \ref{thm: represetation variation}. {In Section~\ref{sec: examples} we present some examples illustrating the connections with classical constructions in ergodic theory and thermodynamic formalism.} It is simple to exhibit in our setting examples where the equilibrium state is not unique and moreover the set of equilibrium states is not convex. We exhibit also how the idempotent pressure and its variational characterization can be used to model non-linear dependencies of the potential. In section \ref{sec: invariant} we present characterizations of the entropy in order to get an idempotent pressure invariant for a dynamical system or a transfer operator. {In Section~\ref{sec:The inverse problem} we analyse the inverse problem of finding an max-plus IFS for which a given idempotent pressure function is invariant. We also study in Section~\ref{sec:Max-plus dynamics} some max-plus dynamical aspects involving the max-plus averages of a dynamical system.}

\section{Proof of Theorem \ref{thm: represetation variation}}\label{sec: proof1}

A direct proof would require a large amount of work and sophisticated arguments. However, we can rely on the Idempotent Analysis results.  Given  a compact metric space $(Z,d)$ consider the set $C(Z,\mathbb{R})$ of continuous functions on $Z$.

\begin{definition} \label{def: Maslov measure}
  A function $m: C(Z,\mathbb{R}) \to \mathbb{R}$ is an idempotent (or Maslov) measure over $Z$ if
  \begin{itemize}
  \item $m(c\odot f) = c \odot m(f)$, $c\in \mathbb{R}$ and $f\in C(Z,\mathbb{R})$;
  \item $m(f \oplus f') =m(f) \oplus m(f')$, $f, f' \in C(Z,\mathbb{R})$.
\end{itemize}
The set of all idempotent measures is the max-plus dual of $C(Z,\mathbb{R})$. The set of idempotent probabilities over $Z$, denoted $I(Z)$,  is the set of all idempotent measures $m$ satisfying
$m(0)=0.$
\end{definition}

The next result was proved in \cite{BRZ}.
\begin{theorem}\cite{BRZ} $I(Z)$ endowed with the pointwise convergence topology  is compact.
\end{theorem}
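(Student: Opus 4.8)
The plan is to realize $I(Z)$ as a closed subset of a compact product space and then invoke Tychonoff's theorem. I would first endow the ambient space of all functionals $m\colon C(Z,\mathbb{R}) \to \mathbb{R}$ with the product topology coming from $\mathbb{R}^{C(Z,\mathbb{R})}$; by definition this is exactly the topology of pointwise convergence, and it is the topology for which every evaluation map $m \mapsto m(f)$ is continuous. The whole argument then splits into two independent tasks: finding uniform-in-$m$ bounds that confine each coordinate $m(f)$ to a fixed compact interval, and checking that the defining axioms of $I(Z)$ carve out a closed set.

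For the bounds, I would first extract monotonicity from the max-plus additivity. If $f \le f'$ pointwise then $f' = f \oplus f'$, so the second axiom gives $m(f') = m(f) \oplus m(f') = \max(m(f), m(f'))$, forcing $m(f') \ge m(f)$. Next, writing $c$ also for the constant function with value $c$, homogeneity together with $m(0)=0$ yields $m(c) = c \odot m(0) = c$. Since every $f \in C(Z,\mathbb{R})$ satisfies $\min_Z f \le f \le \max_Z f$ as functions, monotonicity gives
\begin{equation*}
  \min_{z \in Z} f(z) \ \le \ m(f) \ \le \ \max_{z \in Z} f(z),
\end{equation*}
so that $m(f)$ lies in the compact interval $K_f := [\min_Z f, \max_Z f]$, which depends only on $f$ and not on $m$. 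Hence $I(Z)$ embeds into the product $\prod_{f} K_f$, which is compact by Tychonoff.

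It remains to show that $I(Z)$ is closed inside this product. Each axiom is a pointwise-closed condition: for fixed $c$ and $f$ the set $\{m : m(c \odot f) = c \odot m(f)\}$ is the zero set of the map $m \mapsto m(c \odot f) - m(f) - c$, which is continuous in the product topology as a combination of coordinate projections; for fixed $f,f'$ the set $\{m : m(f \oplus f') = m(f) \oplus m(f')\}$ is the zero set of $m \mapsto m(f \oplus f') - \max(m(f), m(f'))$, continuous because $\max$ is; and $\{m : m(0)=0\}$ is closed as the preimage of a point under a coordinate projection. Intersecting these closed sets over all admissible $c, f, f'$ shows that $I(Z)$ is closed, hence compact as a closed subset of a compact space.

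The step I expect to require the most care is the passage from the abstract axioms to the a priori interval bound, since it is precisely what makes Tychonoff applicable; in particular one must confirm that the pointwise convergence topology referred to in the statement really is the subspace topology inherited from $\mathbb{R}^{C(Z,\mathbb{R})}$, so that the compactness of the product and the continuity of all evaluation maps are simultaneously available. Everything else is a routine verification that continuity of $\max$ and of coordinate projections propagates to the constraint sets.
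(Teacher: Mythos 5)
Your proof is correct. Note that the paper itself offers no argument for this statement: it is quoted verbatim from the reference [BRZ] (Bazylevych--Repov\v{s}--Zarichnyi), so there is no internal proof to compare against. Your route --- embedding $I(Z)$ into $\prod_{f} [\min_Z f, \max_Z f] \subset \mathbb{R}^{C(Z,\mathbb{R})}$, which requires deriving monotonicity from max-plus additivity ($f \le f'$ gives $f \oplus f' = f'$, hence $m(f') = \max(m(f), m(f'))$), deriving $m(c) = c$ from homogeneity plus the normalization $m(0)=0$, and then checking that each axiom is a pointwise-closed condition so that Tychonoff applies --- is the standard functional-analytic argument for this kind of statement, and each step you give is sound; in particular the uniform-in-$m$ interval bound is exactly the point that makes the product compact, and you identify it correctly as the crux. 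This is essentially the argument used in the idempotent-measure literature, so your proposal reconstructs the cited proof rather than diverging from it; its value here is that it makes the paper's citation self-contained with only elementary tools (Tychonoff and continuity of $\max$ and of coordinate projections).
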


\begin{definition}\label{def:usc}
   We denote $U(Z)$ the set of all u.s.c. functions $\lambda$ taking image in $\mathbb{R}_{\max}$ such that $\lambda(z_0)>-\infty$ for some $z_0 \in Z$. In other words $\operatorname{supp}(\lambda)=\{ z| \lambda(z)>-\infty\} \neq \varnothing$.
\end{definition}

\begin{remark}
	In Definition~\ref{def :max-plus operations} the algebraic structure $(\mathbb{Y},  \bigoplus, \bigodot)$  is not a $\mathbb{R}_{\max}$-semimodule (neither a $\mathbb{R}$-semimodule). To start, it is not closed under the  scalar multiplication because $-\infty \bigodot g \not\in C(\mathcal{P}(X), \mathbb{R})$. Moreover, even if take $1_{\mathbb{Y}}(y)=0, \forall y \in Y$ in such way that $(\mathbb{Y}=C(\mathcal{P}(X), \mathbb{R}),\bigodot)$  is a monoid the set $(\mathbb{Y},\bigoplus)$ is just a semigroup. Indeed,  the element candidate to be the identity $0_{\mathbb{Y}}$, must verify $(g\bigoplus 0_{\mathbb{Y}})(y) =g(y) \oplus 0_{\mathbb{Y}}(y)= g(y), \forall y \in Y$ is the function
	$$0_{\mathbb{Y}}(y):=-\infty,\; \forall y \in Y$$
	meaning that $0_{\mathbb{Y}} \in C(Y, \mathbb{R}_{\max})$ but $0_{\mathbb{Y}} \not\in C(Y, \mathbb{R})=\mathbb{Y}$.

\end{remark}

Despite these differences, a representation theorem for idempotent measures similar to \cite{KM89} was proved in \cite{MO1} for this particular setting.

\begin{theorem}\cite[Theorem 1.2]{MO1} \label{thm:represent idepotent}
  Let $(Z,d)$ be a compact metric space. A function $m:C(Z,\mathbb{R})\to\mathbb{R}$ is an idempotent measure if, and only if,
  \begin{equation}\label{eq:represent idepotent}
  m(f)=\sup_{z \in Z} [\lambda(z) + f(z)]
  \end{equation}
  where $ \lambda \in U(Z)$.  Moreover, such function is unique in $U(Z)$ and $m \in I(Z)$ if, and only if, $\sup_{z \in Z} \lambda(z)=0$.
\end{theorem}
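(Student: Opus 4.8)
The plan is to prove both implications of the equivalence, treating the construction and validity of $\lambda$ as the conceptual core. The sufficiency direction is routine and I would dispatch it first. Given $\lambda \in U(Z)$, define $m(f):=\sup_{z}[\lambda(z)+f(z)]$ and verify the two axioms directly: homogeneity follows by pulling the constant out of the supremum, $\sup_z[\lambda(z)+c+f(z)]=c+\sup_z[\lambda(z)+f(z)]$; additivity follows from the elementary identity $\sup_z\max(a(z),b(z))=\max(\sup_z a(z),\sup_z b(z))$ applied to $a=\lambda+f$, $b=\lambda+f'$. Finiteness of $m(f)$ is automatic: an u.s.c. function on the compact space $Z$ is bounded above, so the supremum is $<+\infty$, while the nonempty support of $\lambda$ forces it to be $>-\infty$.

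For the necessity direction I would take as candidate the max-plus conjugate
\[
\lambda(z):=\inf_{g\in C(Z,\mathbb{R})}\big[m(g)-g(z)\big].
\]
First I would extract three structural consequences of the axioms. Monotonicity: if $f\le g$ then $f\oplus g=g$, so $m(g)=m(f)\oplus m(g)\ge m(f)$. The translation identity $m(f+c)=m(f)+c$ is just Axiom A1 read with a constant function. Finite max-stability $m(\max_i h_i)=\max_i m(h_i)$ is Axiom A2 iterated. From these, $\lambda$ is u.s.c. as an infimum of the continuous maps $z\mapsto m(g)-g(z)$, is bounded above by $m(0)$ (take $g=0$), and satisfies the easy inequality $\lambda(z)+f(z)\le m(f)$ for every $z$, whence $\sup_z[\lambda(z)+f(z)]\le m(f)$.

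The heart of the argument, and the step I expect to be the main obstacle, is the reverse inequality $m(f)\le\sup_z[\lambda(z)+f(z)]$, since this is where the purely algebraic additivity axiom must be turned into a genuine concentration statement. I would argue by contradiction. If $\sup_z[\lambda(z)+f(z)]\le m(f)-\eta$ for some $\eta>0$, then unwinding the infimum defining $\lambda$ yields, for each $z$, a function $g_z$ with $m(g_z)-g_z(z)<\lambda(z)+\eta/2\le m(f)-f(z)-\eta/2$. Normalizing $h_z:=g_z-m(g_z)$ gives $m(h_z)=0$ (by translation) and $h_z(z)>f(z)-m(f)+\eta/2$. By continuity this strict inequality persists on an open neighborhood $U_z$, where $h_z(w)>f(w)-m(f)+\eta/4$. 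Compactness of $Z$ extracts a finite subcover $U_{z_1},\dots,U_{z_n}$, and the pointwise maximum $H:=\max_i h_{z_i}$ then satisfies $H(w)>f(w)-m(f)+\eta/4$ for all $w$. Monotonicity and translation give $m(H)\ge\eta/4>0$, while finite max-stability gives $m(H)=\max_i m(h_{z_i})=0$, a contradiction. This compactness-plus-finite-max maneuver is the delicate point.

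Finally I would settle uniqueness and normalization. For uniqueness, any representing $\lambda'\in U(Z)$ must equal $\inf_g[m(g)-g(z)]$: the $\ge$ direction is the easy inequality above, and for $\le$ I would test against $g(w)=-K\,d(w,z)$ and let $K\to\infty$, using upper semicontinuity of $\lambda'$ at $z$ together with its boundedness to show $\sup_w[\lambda'(w)-K\,d(w,z)]\to\lambda'(z)$; this pins $\lambda'$ down, forcing $\lambda'=\lambda$. The normalization claim is immediate from $m(0)=\sup_z\lambda(z)$, so $m\in I(Z)\iff m(0)=0\iff\sup_z\lambda(z)=0$.
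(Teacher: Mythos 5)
The paper never proves this statement: it is imported verbatim as \cite[Theorem 1.2]{MO1}, and the only use made of it here is the one-line invocation in the proof of Theorem~\ref{thm: represetation variation}. So there is no in-paper proof to compare against, and your proposal must be judged on its own merits --- and it holds up as a correct, self-contained proof in the Kolokoltsov--Maslov style that the paper's remarks only sketch for the locally compact setting. The sufficiency direction is fine. For necessity, the conjugate candidate $\lambda(z)=\inf_{g\in C(Z,\mathbb{R})}[m(g)-g(z)]$ is u.s.c.\ as an infimum of continuous functions, the easy inequality $\sup_z[\lambda(z)+f(z)]\le m(f)$ is correct, and your contradiction argument for the reverse inequality is exactly the right mechanism for converting the algebraic axioms into a concentration statement: normalizing $h_z=g_z-m(g_z)$ so that $m(h_z)=0$, spreading the strict pointwise inequality to neighborhoods, extracting a finite subcover, and then playing monotonicity plus translation invariance (giving $m(H)\ge\eta/4>0$) against finite max-stability (giving $m(H)=\max_i m(h_{z_i})=0$). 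The uniqueness step via $g_K(w)=-K\,d(w,z)$ is also sound: upper semicontinuity of $\lambda'$ at $z$ together with boundedness above (automatic for an u.s.c.\ function on a compact space) yields $\sup_w[\lambda'(w)-K\,d(w,z)]\to\lambda'(z)$, which pins $\lambda'$ to the conjugate formula; and $m(0)=\sup_z\lambda(z)$ settles the normalization claim and, in particular, shows $\lambda\not\equiv-\infty$, i.e.\ $\lambda\in U(Z)$ --- a point worth stating explicitly since membership in $U(Z)$ is part of the conclusion.

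One wording slip, trivially repaired: at a point with $\lambda(z)=-\infty$ you cannot literally choose $g_z$ with $m(g_z)-g_z(z)<\lambda(z)+\eta/2$, since nothing is smaller than $-\infty$. But the step survives unchanged, because the hypothesis $\lambda(z)\le m(f)-f(z)-\eta$ means the infimum defining $\lambda(z)$ lies below $m(f)-f(z)-\eta$, so some $g_z$ satisfies $m(g_z)-g_z(z)<m(f)-f(z)-\eta/2$ directly, which is the only inequality the rest of the argument uses.
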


 It is worth to notice that an analogous result was previously stated for separable locally compact topological spaces;  the original Maslov's work \cite{KM89}, considered functionals acting on continuous functions, tending to zero at infinite and with compact support, taking image in to a metric semiring. The setting of \cite{KM89} is not exactly the same as  in \cite{MO1}.

We can now prove Theorem~\ref{thm: represetation variation}
\begin{proof} [Proof of Theorem~\ref{thm: represetation variation}]
In our case, the hypothesis ensures that $Z= \mathcal{P}(X)$ is a compact metric space thus the  idempotent pressure function $\ell$ is actually an idempotent measure (or Maslov measure) on $\mathcal{P}(X)$. By Theorem~\ref{thm:represent idepotent} we obtain the representation in Equation~\eqref{eq:represent variation}. Reciprocally, if $h:\mathcal{P}(X) \to \mathbb{R}_{\max}$ is bounded above and it is not identically $-\infty$ then defining $\ell$ by  $ \ell(g)= \sup_{\mu \in \mathcal{P}(X)} [ g(\mu)+h(\mu)],$ it is immediate to check that we get an idempotent pressure functional.	
\end{proof}

\begin{remark}
	It can be constructed a version of Theorem~\ref{thm: represetation variation} for  idempotent pressure functions on topological spaces, see \cite[Theorem 1]{KM89}. In this case, the original Maslov's work \cite{KM89}, considered bounded maps on separable locally compact topological spaces taking value in some semiring, tending to infinite at infinite and with compact support. In that work they considered the semimodule of linear (max-plus) functionals acting on that functions with image in a metric semiring.
\end{remark}

\section{Comparison with level-1 convex pressure}\label{sec : level1}

For sake of comparison we recall that \cite{Bis} (and its correction\cite{BisCorrection}) defines a (convex, level-1) pressure function. We assume in the present work that $X$ is just  a compact metric space.
\begin{definition}\cite[Definition 2.1]{Bis}\label{def:Bis}
	A function $\Gamma: C(X, \mathbb{R}) \to \mathbb{R}$ is called a pressure function if it satisfies the following conditions:\\
	(C1) Monotonicity: $\varphi \leqslant \psi \Rightarrow \Gamma(\varphi) \leqslant \Gamma(\psi) \quad \forall \varphi, \psi \in C(X, \mathbb{R})$.\\
	(C2) Translation invariance: $\Gamma(\varphi+c)=\Gamma(\varphi)+c \quad \forall \varphi \in C(X, \mathbb{R}) \quad \forall c \in \mathbb{R}$.\\
	(C3) Convexity: $\Gamma(t \varphi+(1-t) \psi) \leqslant t \Gamma(\varphi)+(1-t) \Gamma(\psi) \quad \forall \varphi, \psi \in C(X, \mathbb{R}) \quad \forall t \in[0,1]$.
\end{definition}
Then, it is provide the following characterization (here applied for compact spaces):
\begin{theorem}\cite[Theorem 1]{BisCorrection}\label{thm:represent pressure varandas}
	Let $\Gamma:  C(X, \mathbb{R}) \rightarrow \mathbb{R}$ be a pressure function. Then
	\begin{equation}\label{eq: varandas entropy}
		\Gamma(\varphi)=\max _{\mu \in \mathcal{P}(X)}\left\{\int \varphi d \mu+\mathfrak{h}(\mu)\right\} \quad \forall \varphi \in C(X, \mathbb{R})
	\end{equation}
	where
	$$
	\mathfrak{h}(\mu)=\inf _{\varphi \in \mathcal{A}_{\Gamma}}\left\{\int \varphi d \mu\right\} \quad \text { and } \quad \mathcal{A}_{\Gamma}=\{\varphi \in C(X, \mathbb{R}): \Gamma(-\varphi) \leqslant 0\}.
	$$
	Moreover, $\mathfrak{h}(\mu)$ is concave and upper semi-continuous. Furthermore, if $\alpha: \mathcal{P}(X) \rightarrow \mathbb{R} \cup\{-\infty,+\infty\}$ is another function taking the role of $\mathfrak{h}$ in \eqref{eq: varandas entropy}, then $\alpha \leqslant \mathfrak{h}$. In addition, one has
	$
	\mathfrak{h}(\mu)=\inf _{\varphi \in C(X, \mathbb{R})}\left\{\Gamma(\varphi)-\int \varphi d \mu\right\}, \quad \forall \mu \in \mathcal{P}(X).
	$
	Finally, the maximum in \eqref{eq: varandas entropy} is attained in $\mathcal{P}(X)$.
\end{theorem}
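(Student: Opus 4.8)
The plan is to recognize \eqref{eq: varandas entropy} as the Fenchel--Moreau biconjugation of $\Gamma$, carried out in the dual pairing between $C(X,\mathbb{R})$ (with the uniform norm) and its topological dual, the space $M(X)$ of signed Radon measures, via $\langle \mu, \varphi\rangle = \int \varphi\, d\mu$. First I would record that (C1) and (C2) force $\Gamma$ to be $1$-Lipschitz: from $\varphi \le \psi + \|\varphi-\psi\|_\infty$ together with monotonicity and translation invariance one gets $\Gamma(\varphi) \le \Gamma(\psi) + \|\varphi - \psi\|_\infty$, and symmetrically. Hence $\Gamma$ is a finite, convex (by (C3)) and continuous functional, so Fenchel--Moreau yields $\Gamma = \Gamma^{\ast\ast}$, i.e. $\Gamma(\varphi) = \sup_{\mu \in M(X)} \{\int \varphi\, d\mu - \Gamma^\ast(\mu)\}$, where $\Gamma^\ast(\mu) = \sup_{\psi} \{\int \psi\,d\mu - \Gamma(\psi)\}$.

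Next I would show the supremum is in fact carried by $\mathcal{P}(X)$, which is where (C1) and (C2) do their second job. If $\mu(X) \ne 1$, replacing $\psi$ by $\psi + c$ in the definition of $\Gamma^\ast$ and using (C2) produces the term $c(\mu(X)-1)$, which is unbounded as $c \to \pm\infty$; thus $\Gamma^\ast(\mu)=+\infty$ unless $\mu$ has total mass one. If $\mu$ is not a positive measure, pick $\psi_0 \ge 0$ with $\int \psi_0\,d\mu < 0$ and test with $-t\psi_0$, $t \to +\infty$; monotonicity gives $\Gamma(-t\psi_0) \le \Gamma(0)$, so the defining supremum again diverges. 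Therefore $\Gamma^\ast(\mu) < +\infty$ only for $\mu \in \mathcal{P}(X)$, and setting $\mathfrak{h} := -\Gamma^\ast$ on $\mathcal{P}(X)$ turns the biconjugation into \eqref{eq: varandas entropy} with the formula $\mathfrak{h}(\mu) = \inf_{\varphi}\{\Gamma(\varphi) - \int\varphi\,d\mu\}$.

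To match the stated formula $\mathfrak{h}(\mu) = \inf_{\varphi \in \mathcal{A}_\Gamma} \int\varphi\,d\mu$ I would use a translation trick: given any $\psi$, set $\varphi = \Gamma(\psi) - \psi$; then (C2) gives $\Gamma(-\varphi) = \Gamma(\psi - \Gamma(\psi)) = 0$, so $\varphi \in \mathcal{A}_\Gamma$, and since $\mu$ is a probability $\int\varphi\,d\mu = \Gamma(\psi) - \int\psi\,d\mu$; conversely any $\varphi \in \mathcal{A}_\Gamma$ gives, with $\psi = -\varphi$, the bound $\Gamma(\psi) - \int\psi\,d\mu \le \int\varphi\,d\mu$. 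This shows the two infima coincide. Concavity and upper semicontinuity of $\mathfrak{h}$ are then immediate, since $\mathfrak{h}$ is an infimum of the weak-$\ast$ continuous affine maps $\mu \mapsto \Gamma(\varphi) - \int\varphi\,d\mu$. Maximality of $\mathfrak{h}$ follows formally: if $\alpha$ also represents $\Gamma$ then $\alpha(\mu) \le \Gamma(\varphi) - \int\varphi\,d\mu$ for every $\varphi$, whence $\alpha \le \mathfrak{h}$. Finally, attainment of the maximum is Weierstrass' theorem, as $\mathcal{P}(X)$ is weak-$\ast$ compact and $\mu \mapsto \int\varphi\,d\mu + \mathfrak{h}(\mu)$ is u.s.c.

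The \emph{main obstacle} I anticipate is the infinite-dimensional convex duality: one must justify that Fenchel--Moreau applies with the correct dual (identifying $C(X,\mathbb{R})^\ast$ with $M(X)$ through Riesz) and that the biconjugate supremum, a priori taken over all continuous linear functionals, is genuinely realized on measures and then on $\mathcal{P}(X)$. If one prefers to avoid invoking Fenchel--Moreau as a black box, the same conclusion can be reached directly by a Hahn--Banach separation argument: the inequality $\Gamma(\varphi) \ge \int\varphi\,d\mu + \mathfrak{h}(\mu)$ is built into the definition of $\mathfrak{h}$, and the reverse inequality at a fixed $\varphi_0$ is obtained by separating the point $(\varphi_0, \Gamma(\varphi_0))$ from the closed convex epigraph of $\Gamma$, the separating functional being, after normalization via (C1)--(C2), exactly a probability measure realizing the maximum.
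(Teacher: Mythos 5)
Your proposal is correct, but there is nothing in the paper to compare it against: Theorem \ref{thm:represent pressure varandas} is imported verbatim from \cite{BisCorrection} (Theorem 1 there) and the present paper gives no proof of it. Judged on its own, your argument is sound and complete. The Lipschitz estimate from (C1)--(C2), the Fenchel--Moreau identity $\Gamma=\Gamma^{**}$ for a finite, convex, norm-continuous functional on $C(X,\mathbb{R})$ (norm-l.s.c.\ plus convexity gives weak l.s.c., and Riesz identifies the dual with the signed Radon measures), and the two localization steps showing $\Gamma^{*}(\mu)=+\infty$ unless $\mu$ has mass one and is positive, are all standard and correctly executed; the translation trick $\varphi=\Gamma(\psi)-\psi$ cleanly identifies $\inf_{\varphi\in\mathcal{A}_\Gamma}\int\varphi\,d\mu$ with $\inf_\psi\{\Gamma(\psi)-\int\psi\,d\mu\}$, and concavity, upper semicontinuity, maximality of $\mathfrak{h}$, and attainment of the maximum all follow as you say. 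Two minor points you should make explicit if you write this up: the existence of $\psi_0\ge 0$ with $\int\psi_0\,d\mu<0$ when $\mu$ is not positive uses the Riesz identification (a functional is positive iff its representing measure is), and upper semicontinuity of $\mathfrak{h}$ is with respect to the weak-$*$ topology, which is the relevant one for the Weierstrass step. It is worth noting that your route is in the same convex-analysis spirit as the cited source, and your suggested Hahn--Banach alternative (separating a point from the epigraph and normalizing the separating functional into a probability via (C1)--(C2)) is essentially the argument this paper itself uses, in the space $\mathcal{P}(X)\times\mathbb{R}$, to prove its level-2 counterpart, Proposition \ref{kul}.
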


\begin{remark} \label{rem:inclusion}
	We notice that there exists a canonical inclusion  $j: C(X, \mathbb{R}) \to C(\mathcal{P}(X), \mathbb{R})$ given by
	$$j(\varphi) (\mu) = \int_{X} \varphi(x) d\mu(x), \; \mu \in \mathcal{P}(X).$$
	In this particular case we have
	\begin{equation}\label{eq:potential max-plus equilibrium}
	\ell(j(\varphi)) = \sup_{\mu \in \mathcal{P}(X)} h_{\ell}(\mu) + \int_{X} \varphi(x) d\mu(x).
	\end{equation}
However it is important to observe that in $C(X, \mathbb{R})$ there exists a natural max-operation, that is:
$\max(\varphi,\psi)(x) := \max\{\varphi(x),\psi(x)\} = \varphi(x)\oplus \psi(x).$
	In general, this max-operation structure on $C(X, \mathbb{R})$ does not agree with the $\bigoplus$ of $C(\mathcal{P}(X), \mathbb{R})$, that is, $j(\max(\varphi,\psi))\neq j(\varphi)\oplus j(\psi)$. Precisely,
	$$j(\max(\varphi,\psi)) (\mu) = \int \max(\varphi,\psi) d\mu  > \max ( \int \varphi d\mu, \int \psi d\mu)= [j(\varphi)\oplus j(\psi)] (\mu).$$
	Consequently,
	$$\ell(j(\max(\varphi,\psi))) \neq \max\{\ell(j(\varphi)), \ell(j(\psi))\}.$$
	
We highlight the fact   that, when considering the inclusion map $j$, the correct $\oplus$ operation to be used is the level-2 max-operation, $$[j(\varphi)\oplus j(\psi)] (\mu) = \max \left( \int \varphi d\mu, \int \psi d\mu\right).$$
  	
\end{remark}

The next theorem shows that the restriction of a level-2 idempotent pressure function to functions of $C(X, \mathbb{R})$ is actually a level-1 convex pressure function in the sense of Definition \ref{def:Bis}. Precisely, the canonical inclusion $j: C(X, \mathbb{R}) \to C(\mathcal{P}(X),\mathbb{R})$ defines a projection of level-2 idempotent pressure functions to level-1 pressure functions.

\begin{theorem}
	Consider  an idempotent pressure function $\ell$ with density entropy $h_{\ell}$ and the canonical inclusion $j: C(X, \mathbb{R}) \to C(\mathcal{P}(X), \mathbb{R})$ given by $j(\varphi) (\mu) = \int_{X} \varphi(x) d\mu(x), \; \mu \in \mathcal{P}(X)$. If we define $\Gamma_{\ell}: C(X, \mathbb{R}) \to \mathbb{R}$ by
	$$\Gamma_{\ell}(\varphi):=\ell(j(\varphi))$$
	then $\Gamma_{\ell}$ is a convex pressure function (in the sense of Definition \ref{def:Bis}). Let $\mathfrak{h}$ be the  concave and upper semi-continuous (entropy) function given in Theorem \ref{thm:represent pressure varandas}. Then  $h_{\ell} \leq \mathfrak{h}$. Reciprocally, each pressure function (in the sense of Definition \ref{def:Bis}) is of the form $\Gamma_{\ell}$ where $\ell$ is an idempotent pressure function. Such map $\ell\mapsto \Gamma_{\ell}$ is surjective, but not injective.
\end{theorem}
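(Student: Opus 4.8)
The plan is to push everything through the max-plus Riesz representation of Theorem~\ref{thm: represetation variation}, which writes $\ell(g)=\sup_{\mu\in\mathcal{P}(X)}[g(\mu)+h_\ell(\mu)]$ for a u.s.c.\ function $h_\ell$ that is bounded above and not identically $-\infty$. Composing with $j$ gives the single working formula
\[
\Gamma_\ell(\varphi)=\ell(j(\varphi))=\sup_{\mu\in\mathcal{P}(X)}\Big[\int_X\varphi\,d\mu+h_\ell(\mu)\Big],
\]
from which the three axioms are read off directly. Axiom (C1) holds because $\varphi\le\psi$ forces $\int\varphi\,d\mu\le\int\psi\,d\mu$ for every $\mu$, so the supremands are ordered; (C2) follows from $\int(\varphi+c)\,d\mu=\int\varphi\,d\mu+c$ (as $\mu$ is a probability), which factors the constant out of the supremum; and (C3) is the subadditivity of $\sup$ applied to $t\big(\int\varphi\,d\mu+h_\ell(\mu)\big)+(1-t)\big(\int\psi\,d\mu+h_\ell(\mu)\big)$, using that $\mu\mapsto\int\varphi\,d\mu$ is affine and $h_\ell(\mu)=t\,h_\ell(\mu)+(1-t)\,h_\ell(\mu)$. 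These checks are routine.

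For $h_\ell\le\mathfrak{h}$ I would not use any envelope description of $\mathfrak{h}$, but rather the variational formula $\mathfrak{h}(\mu)=\inf_{\varphi\in C(X,\mathbb{R})}\{\Gamma_\ell(\varphi)-\int\varphi\,d\mu\}$ supplied by Theorem~\ref{thm:represent pressure varandas}. Indeed, the representation above gives $\Gamma_\ell(\varphi)\ge\int\varphi\,d\mu+h_\ell(\mu)$ for every $\varphi$, hence $\Gamma_\ell(\varphi)-\int\varphi\,d\mu\ge h_\ell(\mu)$; taking the infimum over $\varphi$ yields $\mathfrak{h}(\mu)\ge h_\ell(\mu)$ for all $\mu$.

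For surjectivity, I start from an arbitrary level-1 pressure function $\Gamma$ and invoke Theorem~\ref{thm:represent pressure varandas} to obtain its concave u.s.c.\ entropy $\mathfrak{h}$, which represents $\Gamma$ as in \eqref{eq: varandas entropy}. The point is that $\mathfrak{h}$ is admissible as a density entropy: it is u.s.c., it is bounded above (taking $\varphi=0$ in the infimum formula gives $\mathfrak{h}\le\Gamma(0)<+\infty$), and it is not identically $-\infty$ (evaluating \eqref{eq: varandas entropy} at $\varphi=0$ gives $\max_\mu\mathfrak{h}(\mu)=\Gamma(0)\in\mathbb{R}$). Hence the reciprocal part of Theorem~\ref{thm: represetation variation} produces an idempotent pressure function $\ell$ with density entropy $h_\ell=\mathfrak{h}$, and then $\Gamma_\ell(\varphi)=\sup_\mu[\int\varphi\,d\mu+\mathfrak{h}(\mu)]=\Gamma(\varphi)$, so $\ell\mapsto\Gamma_\ell$ is onto.

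The main obstacle is non-injectivity, which rests on the fact that $\Gamma_\ell$ only probes $h_\ell$ against the affine functions $\mu\mapsto\int\varphi\,d\mu$. If $\widehat{h_\ell}$ denotes the u.s.c.\ concave envelope of $h_\ell$, then for every affine $a$ one expects $\sup_\mu[a(\mu)+h_\ell(\mu)]=\sup_\mu[a(\mu)+\widehat{h_\ell}(\mu)]$: the inequality ``$\le$'' is immediate from $h_\ell\le\widehat{h_\ell}$, while ``$\ge$'' follows because evaluating $a+\widehat{h_\ell}$ at a point $\mu_0=\sum_i t_i\mu_i$ (nearly) realizing the envelope gives $a(\mu_0)+\sum_i t_i h_\ell(\mu_i)=\sum_i t_i[a(\mu_i)+h_\ell(\mu_i)]\le\sup_\mu[a(\mu)+h_\ell(\mu)]$, since $a$ is affine. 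Thus $\Gamma_\ell$ depends on $h_\ell$ only through $\widehat{h_\ell}$, consistent with $\mathfrak{h}$ being concave. To conclude, I would exhibit, on an $X$ with at least two points, a u.s.c.\ bounded-above $h$ that is not concave, so $h\ne\widehat{h}$; by the uniqueness clause of Theorem~\ref{thm: represetation variation} the choices $h$ and $\widehat{h}$ define distinct idempotent pressure functions $\ell\ne\ell'$, yet $\Gamma_\ell=\Gamma_{\ell'}$. The delicate points are the u.s.c.\ and closure issues in forming the concave envelope in the infinite-dimensional $\mathcal{P}(X)$; for the non-injectivity statement it is cleanest to bypass the general envelope and simply verify $\Gamma_\ell=\Gamma_{\ell'}$ directly on the explicit example.
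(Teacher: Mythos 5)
Your treatment of the three pressure axioms, of the inequality $h_{\ell}\le\mathfrak{h}$, and of surjectivity is correct and essentially the paper's own argument: the paper likewise reads everything off the representation $\Gamma_{\ell}(\varphi)=\sup_{\mu}[\int\varphi\,d\mu+h_{\ell}(\mu)]$, declares the axioms (C1)--(C3) immediate, and deduces $h_{\ell}\le\mathfrak{h}$ from Theorem \ref{thm:represent pressure varandas} (the paper invokes the maximality clause ``any $\alpha$ taking the role of $\mathfrak{h}$ satisfies $\alpha\le\mathfrak{h}$'', while you invoke the infimum formula $\mathfrak{h}(\mu)=\inf_{\varphi}\{\Gamma_{\ell}(\varphi)-\int\varphi\,d\mu\}$; both are clauses of that theorem and both work). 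Surjectivity is handled identically in both: represent $\Gamma$ by its concave u.s.c.\ entropy $\mathfrak{h}$ and feed $\mathfrak{h}$ into the reciprocal part of Theorem \ref{thm: represetation variation}.

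The gap is in non-injectivity, the one place where you diverge from the paper, and you leave it unfinished. Your envelope computation (that $\sup_{\mu}[a(\mu)+h(\mu)]=\sup_{\mu}[a(\mu)+\widehat{h}(\mu)]$ for affine $a$) is fine, but to conclude $\ell\neq\ell'$ from the uniqueness clause of Theorem \ref{thm: represetation variation} you need both densities to be u.s.c., and --- as you yourself flag --- the concave envelope $\widehat{h}$ of an u.s.c.\ function on the infinite-dimensional set $\mathcal{P}(X)$ need not be u.s.c.; if it is not, $\ell'$ is represented by the u.s.c.\ regularization of $\widehat{h}$ and one must still argue that this regularization differs from $h$. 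Your declared fallback, to ``verify $\Gamma_{\ell}=\Gamma_{\ell'}$ directly on the explicit example'', is the right move, but the example is never actually produced, so as written the non-injectivity claim is not proved. The paper's route avoids the envelope entirely: having already shown that $\Gamma_{\ell}$ is represented by the concave u.s.c.\ function $\mathfrak{h}$ of Theorem \ref{thm:represent pressure varandas}, it pairs any non-concave u.s.c.\ density $h$ with that $\mathfrak{h}$; these are two distinct u.s.c.\ densities (one concave, one not) inducing the same level-1 pressure, so the corresponding idempotent pressures are distinct by uniqueness. To complete your version concretely, take $X=\{1,2\}$, let $h$ equal $0$ at $\delta_{1}$ and $\delta_{2}$ and $-\infty$ elsewhere, and let $h'\equiv 0$; both are u.s.c.\ and bounded above, both yield $\Gamma(\varphi)=\max\{\varphi(1),\varphi(2)\}$, yet $h\neq h'$, so $\ell\neq\ell'$ while $\Gamma_{\ell}=\Gamma_{\ell'}$.
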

\begin{proof}
	Let $\ell$ be an idempotent pressure function. From Theorem \ref{thm: represetation variation} there exists a unique u.s.c. function $h_\ell: \mathcal{P}(X) \to \mathbb{R}_{\max}$ such that
	\[	\ell(g)= \sup_{\mu \in \mathcal{P}(X)} [ g(\mu)+h_\ell(\mu)],\]
	for any $g\in  C(\mathcal{P}(X), \mathbb{R})$. Define $\Gamma_{\ell}(\varphi):=\ell(j(\varphi)), \; \varphi \in C(X, \mathbb{R})$. In this way,
	\[\Gamma_{\ell}(\varphi) =\sup_{\mu \in \mathcal{P}(X)} [ \int \varphi d\mu+h_\ell(\mu)].\]
	Consequently, the hypotheses in Definition \ref{def:Bis} are immediately satisfied by $\Gamma_{\ell}$. Furthermore, by Theorem~\ref{thm:represent pressure varandas}, $h_{\ell} \leq \mathfrak{h}$.
	
	Reciprocally, consider $\Gamma(\varphi)=\max_{\mu \in \mathcal{P}(X)}\left\{\mathfrak{h}(\mu)+\int \varphi d \mu\right\}$ a convex pressure function. Note that $\mathfrak{h}(\mu)$ is u.s.c., so the formula
	$\ell(g)= \sup_{\mu \in \mathcal{P}(X)} \mathfrak{h}(\mu) + g(\mu),\,g\in C(\mathcal{P}(X), \mathbb{R})$  defines an idempotent pressure function which extends $\Gamma$. Clearly $\Gamma_{\ell}=\Gamma$.  The map $\ell \mapsto \Gamma_\ell$ is not injective, because the unique u.s.c. density entropy in Theorem \ref{thm: represetation variation}  does not need to be concave.
\end{proof}

\begin{proposition} \label{kul}  Suppose that $h_\ell$ is u.s.c. and concave and it is the density entropy of an idempotent pressure $\ell$. Then for all $\mu \in \mathcal{P}(X)$ we have
	\begin{equation} \label{fu1} h_\ell(\mu) =  \inf_{g \in C(\mathcal{P}(X),\mathbb{R})} \{ \ell(g) - g(\mu)\}
	\end{equation}
and for all $g\in C(\mathcal{P}(X),\mathbb{R})$ we have
\[\ell(g) = \sup_{\mu \in \mathcal{P}(X)} \{h_\ell(\mu) +g(\mu)\}.\]
Finally, considering $\Gamma_\ell$ and its entropy $\mathfrak{h}$ (given in Theorem \ref{thm:represent pressure varandas}) we have $\mathfrak{h}=h_\ell$.
\end{proposition}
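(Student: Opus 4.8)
The plan is to dispatch the three assertions separately, in increasing order of difficulty, isolating exactly where concavity is needed. The second displayed identity, $\ell(g)=\sup_{\mu}\{h_\ell(\mu)+g(\mu)\}$, is nothing but the representation furnished by Theorem~\ref{thm: represetation variation}, since $h_\ell$ is by hypothesis the density entropy of $\ell$; so there is nothing to prove there.

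For the first identity I would establish the two inequalities between $h_\ell(\mu)$ and $\tilde h(\mu):=\inf_{g}\{\ell(g)-g(\mu)\}$. The bound $\tilde h(\mu)\geq h_\ell(\mu)$ is immediate: for every $g$ the representation gives $\ell(g)\geq h_\ell(\mu)+g(\mu)$ (evaluate the supremum at $\nu=\mu$), whence $\ell(g)-g(\mu)\geq h_\ell(\mu)$ and the infimum inherits the bound. For the reverse inequality I would feed into $\ell$ the explicit family of ``tent'' functions $g_M(\nu):=-M\,D(\mu,\nu)$, where $D$ is any metric compatible with the weak$^*$ topology (e.g.\ $W_1$); each $g_M$ is Lipschitz, hence lies in $C(\mathcal{P}(X),\mathbb{R})$, and vanishes at $\nu=\mu$. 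Then $\ell(g_M)-g_M(\mu)=\sup_\nu[h_\ell(\nu)-M\,D(\mu,\nu)]$, and the point is the elementary sup-convolution fact that, since $h_\ell$ is u.s.c.\ and bounded above, $\inf_M\sup_\nu[h_\ell(\nu)-M\,D(\mu,\nu)]=h_\ell(\mu)$ (valid even when $h_\ell(\mu)=-\infty$): for $\nu$ near $\mu$ upper semicontinuity controls $h_\ell(\nu)$, while for $\nu$ far from $\mu$ the penalty $-M\,D(\mu,\nu)$ beats the uniform upper bound once $M$ is large. This forces $\tilde h(\mu)\leq h_\ell(\mu)$; I note that this part uses only upper semicontinuity, not concavity, because the test functions $g$ range over \emph{all} of $C(\mathcal{P}(X),\mathbb{R})$, so the biconjugate recovers the full u.s.c.\ hull.

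The concavity hypothesis enters only in the final identity $\mathfrak{h}=h_\ell$. Here $\Gamma_\ell(\varphi)=\ell(j(\varphi))=\sup_\mu[\int\varphi\,d\mu+h_\ell(\mu)]$, and the preceding theorem already gives $h_\ell\leq\mathfrak{h}$, so I must prove $\mathfrak{h}\leq h_\ell$. I would use the dual formula $\mathfrak{h}(\mu)=\inf_{\varphi\in C(X,\mathbb{R})}[\Gamma_\ell(\varphi)-\int\varphi\,d\mu]$ from Theorem~\ref{thm:represent pressure varandas} and recognize it as the concave biconjugate of $h_\ell$ taken only with respect to the affine functionals $\nu\mapsto\int\varphi\,d\nu$, $\varphi\in C(X,\mathbb{R})$. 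In contrast with the first identity, only affine test functions are now available, so the biconjugate recovers the concave u.s.c.\ hull of $h_\ell$ rather than its u.s.c.\ hull; concavity of $h_\ell$ is what makes the two coincide. Concretely, I would fix $\mu_0$ and $a>h_\ell(\mu_0)$, regard $\mathcal{P}(X)$ as a compact convex subset of the space of signed Borel measures with the weak$^*$ topology, and separate the point $(\mu_0,a)$ from the hypograph $\{(\nu,t):t\leq h_\ell(\nu)\}$, which is closed by upper semicontinuity and convex by concavity, via the Hahn--Banach theorem; the separating continuous linear functional is represented by some $\varphi\in C(X,\mathbb{R})$ together with a constant, which furnishes a supergradient of $h_\ell$ at $\mu_0$ and yields $\Gamma_\ell(\varphi)-\int\varphi\,d\mu_0\leq a$, hence $\mathfrak{h}(\mu_0)\leq a$. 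Letting $a\downarrow h_\ell(\mu_0)$ closes the argument.

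I expect the main obstacle to be this separation step. One must ensure that the weak$^*$ topology makes $\mathcal{P}(X)$ a compact convex set whose ambient topological dual is exactly $C(X,\mathbb{R})$, so that the separating functional is genuinely of the form $\nu\mapsto\int\varphi\,d\nu$ and not merely an element of the algebraic dual; one must also handle the properness of $h_\ell$ (the case $h_\ell\equiv-\infty$ is excluded by hypothesis, and points with $h_\ell(\mu)=-\infty$ are covered by letting $a\to-\infty$) and the existence of supporting functionals at boundary points of the domain, which is precisely why the argument is phrased with $a>h_\ell(\mu_0)$ and a limit rather than an exact supergradient. Everything else reduces to the Fenchel--Moreau theorem together with the elementary penalization estimate used for the first identity.
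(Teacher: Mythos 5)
Your proposal is correct, but it takes a genuinely different route from the paper on the first identity, and the difference is worth recording. The paper never proves \eqref{fu1} directly: it establishes only the easy inequality $\inf_{g}\{\ell(g)-g(\mu)\}\geq h_\ell(\mu)$, and then closes the circle
$h_\ell \geq \mathfrak{h} \geq \inf_{\varphi}\{\Gamma_\ell(\varphi)-\int \varphi\, d\mu\}\geq \inf_{g}\{\ell(g)-g(\mu)\}\geq h_\ell$
by running the hypograph-separation argument with \emph{affine} test functions $\mu\mapsto \int A\,d\mu$; thus in the paper concavity is woven into the proof of \eqref{fu1} itself, which emerges only as a byproduct of the sandwich. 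You instead prove \eqref{fu1} directly by the penalization $g_M(\nu)=-M\,W_1(\mu,\nu)$ (any compatible metric works); the sup-convolution computation is sound --- boundedness above of $h_\ell$ follows from $\ell(0)=\sup_\mu h_\ell(\mu)\in\mathbb{R}$, and your treatment of the case $h_\ell(\mu)=-\infty$ is valid --- and it yields the strictly stronger observation that \eqref{fu1} holds for \emph{every} idempotent pressure, with upper semicontinuity alone. Your proof of $\mathfrak{h}=h_\ell$ then coincides in substance with the paper's: you import $h_\ell\leq\mathfrak{h}$ from the preceding theorem (the paper re-derives it through the chain above), and the reverse inequality is the same Hahn--Banach separation of $(\mu_0,a)$, $a>h_\ell(\mu_0)$, from the closed convex hypograph of $h_\ell$, with the separating functional represented by some $A\in C(X,\mathbb{R})$ and a vertical coefficient $\alpha$, exactly as in the paper via the Dunford--Schwartz separation theorem. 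What your decomposition buys is modularity and sharper hypotheses (u.s.c.\ suffices for \eqref{fu1}; concavity is needed only so that the affine biconjugate $\mathfrak{h}$ recovers $h_\ell$ rather than its concave hull); what the paper's route buys is economy, since one separation argument settles both claims at once. One shared caveat: both separation arguments fix the sign of $\alpha$ by evaluating the separating inequality at $(\mu_0,h_\ell(\mu_0))\in C$, which presumes $h_\ell(\mu_0)>-\infty$. For $\mu_0$ outside the support of $h_\ell$ one should argue instead that $\alpha\leq 0$ is automatic because the hypograph is unbounded below, and that in the degenerate case $\alpha=0$ the strict separation gap together with the scaling $\varphi=\lambda A$, $\lambda\to+\infty$, forces $\mathfrak{h}(\mu_0)=-\infty$ directly. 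You at least flag this issue (the paper does not), but your suggested fix of ``letting $a\to-\infty$'' is not by itself sufficient; the sign analysis above is what actually closes it.
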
 	

As a consequence of this proposition we get the following corollary.

\begin{corollary}
	Suppose that $\alpha: \mathcal{P}(X) \rightarrow \mathbb{R}_{\max} $ is a function taking the role of $\mathfrak{h}$ in \eqref{eq: varandas entropy}. If $\alpha$ is u.s.c and concave then $\alpha = \mathfrak{h}$.
\end{corollary}
\begin{proof}
Let $\Gamma$ be a convex pressure function and suppose that $\alpha: \mathcal{P}(X) \rightarrow \mathbb{R}_{\max} $ is u.s.c, concave and satisfies
\[\Gamma(\varphi)=\max _{\mu \in \mathcal{P}(X)}\left\{\int \varphi d \mu+\alpha(\mu)\right\} \quad \forall \varphi \in C(X, \mathbb{R}).\]
Let $\ell$ be the idempotent pressure with density entropy $\alpha$. We have $\Gamma=\Gamma_\ell$ and applying above proposition we get  $\mathfrak{h}=\alpha.$
\end{proof}

\begin{proof}[Proof of Proposition \ref{kul}] We will adapt the proof of Theorem 9.12 in \cite{Walters} to the present case.
	Equation
	\[\ell(g) = \sup_{\mu \in \mathcal{P}(X)} \{h_\ell(\mu) +g(\mu)\}\]
is a consequence of 	$h_\ell$ to be the density entropy of $\ell$ and its concavity is not necessary.

Given $\mu\in\mathcal{P}(X)$ and $g  \in C(\mathcal{P}(X),\mathbb{R})$, from above equation we get
	$$\ell(g) \geq h_\ell(\mu) + g(\mu).$$
Then
	$$\ell(g) - g(\mu)\geq h_\ell(\mu)$$
and consequently
	\begin{equation} \label{nnf1}\inf_{g \in C(\mathcal{P}(X),\mathbb{R})} \{\ell(g) - g(\mu)\}\geq h_\ell(\mu).
	\end{equation}

On the other hand, considering $\Gamma_{\ell}: C(X, \mathbb{R}) \to \mathbb{R}$, defined by
$\Gamma_{\ell}(\varphi):=\ell(j(\varphi))$ we get a convex pressure function. From, Theorem \ref{thm:represent pressure varandas}
\[\mathfrak{h}(\mu)=\inf_{\varphi \in C(X, \mathbb{R})}\left\{\Gamma_\ell(\varphi)-\int \varphi d \mu\right\}. \]
Then
\[\mathfrak{h}(\mu)= \inf_{\varphi \in C(X, \mathbb{R})}\left\{\ell(j(\varphi))-j(\varphi)(\mu)\right\}\geq\inf_{g \in C(\mathcal{P}(X),\mathbb{R})} \{\ell(g) - g(\mu)\}\geq h_\ell(\mu). \]
Now we will show that
\begin{equation}\label{fu2}
	h_\ell \geq \inf_{\varphi \in C(X, \mathbb{R})}\left\{\Gamma_\ell(\varphi)-\int \varphi d \mu\right\}.
\end{equation}

Denote
$$C = \{(\mu,t)\in \mathcal{P} (X) \times \mathbb{R} \,| \, t \leq h_\ell(\mu) \}. $$

Note that for any  $\mu$ in the support of  $h_\ell$, there exists a $t$ such that $(\mu,t)\in C.$
As $h_\ell$ is u.s.c. and concave by hypothesis we get that   $C$ is a convex closed set.  Fix a probability $\mu_0$ and take $b> h_\ell(\mu_0) $.
As $h_\ell$ is u.s.c., we get that $(\mu_0,b) \notin C$.  The set $\{(\mu_0,b) \}$ is compact and convex. It follows that there exists a continuous function $A: X \to \mathbb{R}$ and a real number  $\alpha$ (see Separation Theorem in  page 417 in \cite{DF}  or  Section 2 in \cite{Mord})  such that
$$ \int A  d \mu  +\alpha t > \int A  d \mu_0  +\alpha b,$$
for all $(\mu,t) \in C \subset  \mathcal{P}(X)\times \mathbb{R}.$

It follows that for any  probability  $\mu\in\mathcal{P}(X)$, taking $t=h_\ell(\mu)$,
$$ \int A  d \mu  +\alpha   h_\ell(\mu)> \int A  d \mu_0  +\alpha b.$$
Now, taking $\mu=\mu_0$ in the  above expression we get that $\alpha   h_\ell(\mu_0)> \alpha b.$ It follows that $\alpha<0.$ Therefore, for any $\mu\in \mathcal{P}(X)$
\begin{equation} \label{jet}  h_\ell(\mu)+ \frac{1}{\alpha}\int A d \mu < b + \frac{1}{\alpha}\int A d \mu_0.
\end{equation}
Taking the supremum on the left hand side with respect to $\mu\in\mathcal{P}(X)$ we get
$$\Gamma_\ell ( \frac{A}{\alpha} )\leq b +  \frac{1}{\alpha}\int A d \mu_0.$$

We conclude that, for any $b>   h_\ell(\mu_0),$
$$b \geq \Gamma_\ell ( \frac{A}{\alpha} ) -  \frac{1}{\alpha}\int A d \mu_0\geq \inf \{ \Gamma_\ell (\varphi) - \int \varphi d \mu_0\, |\, \varphi \in C(X, \mathbb{R}) \}.$$
Finally, we get that
$$  h_\ell(\mu_0) \geq \inf \{ \Gamma_\ell (\varphi) - \int \varphi d \mu_0\, |\, \varphi \in C(X, \mathbb{R}) \}.$$
This shows that  \eqref{fu2} is true and finishes the proof.

\end{proof}

\section{Some examples}\label{sec: examples}

Given a fixed density entropy $h$ and a fixed $g \in C(\mathcal{P}(X), \mathbb{R})$, one interesting problem is to find an $\mu \in \mathcal{P}(X)$ attaining the value   $\ell_h (g)$ (an equilibrium state).

\begin{example} \label{tf1} Take $X=\{1,...,d\}$ and then $\mathcal{P}(X)$ as the simplex
\begin{equation} \label{ewq1}\mathcal{P}(X) = \{ p=(p_1,p_2,...,p_d) \,|\, \sum_{j=1}^{d} p_j=1; \, p_j\geq 0 \,\forall j\in\{1,2,..,d\}\}.
\end{equation}
Consider the Shannon entropy
\begin{equation} \label{ewq}   h  (p_1,p_2,...,p_d)  = - \sum_{j=1}^{d} p_j \log p_j,
\end{equation}
where $0\cdot\log(0)=0$ by convention. This function $h$ is continuous and concave. Consider the functional $\ell_h$ as defined by \eqref{eq:represent variation}.  Then,
$\ell_h $ is an idempotent pressure function.

Let us consider the level-1 case. Take a vector $(g_1,g_2,...,g_d) \in \mathbb{R}^d$ and then define $g:\mathcal{P}(X)\to\mathbb{R}$ by $g(p) = \sum_{j=1}^{d} g_j p_j$. In this way we get
$$\ell_h (g)= \bigoplus_{p \in \mathcal{P}(X)} h(p) \odot  g(p)= \sup_{p\in \mathcal{P}(X)}  \{ (- \sum_j p_j \log p_j ) +  \sum_j g_j p_j\}. $$
Question: given such $g$,  what is the probability $p\in \mathcal{P}(X)$ which  attains the value  $\ell_h (g)$? It is well known that the  Gibbs probability, $p_j= \frac{e^{g_j}}{ \sum_k e^{g_k}}$, $j\in\{1,2,...,d\}$, is the solution  (see Lemma 9.9 in \cite{Walters}).

We consider now the level-2 case. In this way we fix a continuous function $g:\mathcal{P}(X)\to\mathbb{R}$ and the idempotent pressure of $g$ which is given by
\[\ell_h (g)= \bigoplus_{p \in \mathcal{P}(X)} h(p) \odot  g(p)= \sup_{p\in \mathcal{P}(X)}  \{ (- \sum_j p_j \log p_j ) +  g(p_1,...,p_d)\}.\]
Clearly we can not claim that there exist a unique equilibrium measure. Denoting $f(p_1,...,p_d) := (- \sum_j p_j \log p_j ) +  g(p_1,...,p_d)$ we are just maximizing this continuous function $f$ over the compact set $\mathcal{P}(X)$. Depending of the nature of $g$ (and then of $f$) we can have a different subset of $\mathcal{P}(X)$ as the set of equilibrium measures. It does not need to be a convex subset, but just a closed subset of $\mathcal{P}(X)$.

\end{example}

\begin{example}\label{remark -infty else}
Our formulation encompasses several classical constructions. For example, if $T: X \to X$ is a measurable map and $\mathcal{M}(T)$ is the set of invariant probabilities, by taking  $h=0$ over $\mathcal{M}(T)$ and $-\infty$ else, the idempotent pressure became
$\ell(g)= \max_{\mu \in \mathcal{M}(T)}  g(\mu), $
which extends the standard level-1 ergodic optimization problem $\max_{\mu\in \mathcal{M}(T)}  \int_{X} A(x) d\mu(x)$. If $h$ is the Kolmogorov-Sinai entropy over $\mathcal{M}(T)$ and $-\infty$ else, the idempotent pressure is given by
$\ell(g)= \max_{\mu \in \mathcal{M}(T)}  [g(\mu)+h(\mu)], $ which in level-1 is given by $$\max_{\mu\in \mathcal{M}(T)}  \int_{X} A(x) d\mu(x) +h(\mu),$$
usually known as the variational principle for pressure.
\end{example}

Now we consider level-1 to level-2  variational principles where the dependence on a potential function, acting in measures, is nonlinear.  This formalism includes the study of the Curie-Weiss model, which is of great importance in Statistical Mechanics (see Section 2 in \cite{FV} and also \cite{Barre}, \cite{BKL}, \cite{LeWa1}, \cite{LeWa2}).

\begin{example} \label{NTF}
We consider  on $\Omega:=\{1,2,...,d\}^\mathbb{N} $ the distance $d(x,y)=2^{-N}$, where $N$ is the smallest natural number  such that $x_j\neq y_j$, where $x=(x_1,x_2,x_3,...)$, $ y= (y_1,y_2,y_3,...).$ Consider also the shift map $\sigma:  \Omega \to \Omega$, given by $\sigma(x)=(x_2,x_3,x_4,...)$. We denote by $\mathcal{M}(\sigma)$ the set of $\sigma$-invariant probabilities on the Borel sigma-algebra of  $\Omega$. Consider a continuous function (a potential) $A:\Omega \to \mathbb{R}$. Given a continuous function $g:\mathcal{P}(\Omega) \to \mathbb{R}$, consider the functional $\mathfrak{l}_h(g):= \sup_{ \mu \in \mathcal{P}(\Omega)} h(\mu) + g(\mu)$, where $h$ is the \textbf{extended entropy} (it coincides with the Kolmogorov-Sinai entropy in the set of invariant probabilities and it is $-\infty$ for non-invariant probabilities). Then, we can define the ``quadratic" pressure for $A$ as
$$P_2(A)= \sup_{\mu \in \mathcal{P}(\Omega)}\left\{ h(\mu) +  \left(\int_{X}A d\mu\right)^2 \right\}= \sup_{\mu \in \mathcal{M}(\sigma)}\left\{ h(\mu) +  \left(\int_{X}A d\mu\right)^2\right\}.$$

We call  $P_2(A)$ the   quadratic non-linear pressure for $A$ (in the sense of \cite{BKL}).  There exists a probability $\mathfrak{m}=\mathfrak{m}_{A,2}$ attaining the supremum value $P_2(A) ,$ where
	$\mathfrak{m}\in\mathcal{M}(\sigma)\subset  \mathcal{P}(\Omega)$. Observe that, as $\mu \to (\int A d \mu)^2$ is not affine, the probability maximizing $P_2(A)$ is not  unique and not necessarily ergodic, see \cite{BKL}.

\end{example}

\medskip

\begin{definition} \label{eenf} Given a continuous function $F: \mathbb{R} \to \mathbb{R}$ and a continuous potential $A: \Omega \to \mathbb{R}$, a probability $\mu=\mu_{F,A}$ is called the  $(F,A)$-equilibrium probability, if it maximizes the nonlinear pressure
$$P_F(A)= \sup_{\mu \in\mathcal{P}(\Omega)} \{ h(\mu) + F\left(\int_{X}A d\mu\right) \}= \sup_{\mu \in \mathcal{M}(\sigma)} \{h(\mu) + F\left(\int A d\mu\right)\},$$
where $h$ is the extended entropy.
\end{definition}

The classical pressure is obtained when $F$ is the identity.

\section{Invariant idempotent pressures}\label{sec: invariant}
We denote by $I(\mathcal{P}(X))$ the set of idempotent pressures functions (even using this notation we do not assume $\ell(0)=0$). Given a continuous map $S: I(\mathcal{P}(X)) \to I(\mathcal{P}(X))$, we will say that an idempotent pressure function $\ell \in I(\mathcal{P}(X))$ is $S$-invariant if  $S(\ell)=\ell$.  When $T: X \to X$ is a continuous dynamical system we can use the  functorial action of $T$ in $C(X, \mathbb{R})$ to define, by duality, a map in $\mathcal{P}(X)$, which is continuous for the weak$^*$ topology. It is the push-forward map $T^{\sharp}:  \mathcal{P}(X) \to \mathcal{P}(X)$, where $ T^{\sharp}(\mu)=\nu$ means that $\int f\,d\nu = \int f\circ T\,d\mu,\,\forall f\in C(X, \mathbb{R}). $

The usual pressure in thermodynamic formalism for level-1 functions has the property $P(f\circ T) = P(f)$, for any $f\in C(X,\mathbb{R})$. In this way it is natural to ask what are the idempotent pressures functions for level-2 functions satisfying
$\ell(g) =\ell(g\circ T^{\sharp})$ for any $g\in C(\mathcal{P}(X),\mathbb{R})$.

Our first result is the following
\begin{proposition}\label{prop: T sharp}
  Let $(X,d)$ be a compact metric space and $T:X \to  X$ be a continuous map. Let $S:I(\mathcal{P}(X)) \to I(\mathcal{P}(X))$ be the map given by
$$S(\ell)(g):=\ell(g\circ T^{\sharp}), \; \ell \in I(\mathcal{P}(X)).$$
If $h \in U(\mathcal{P}(X))$ satisfies $h(\mu) = -\infty$ if $\mu$ is not $T-invariant$, then the associated idempotent pressure is invariant for $S$. Furthermore, $\ell$ is invariant for $S$ if, and only if, its density entropy satisfies
  $$h_{\ell}(\nu)=\left\{\begin{array}{cl} \sup_{[\mu \in \mathcal{P}(X)\; |\; T^{\sharp}(\mu)=\nu]} h_{\ell}(\mu), &\; \text{if}\, \,\nu \in T^\sharp(\mathcal{P}(X))\\ -\infty & \text{ if} \,\,\nu\notin T^\sharp(\mathcal{P}(X)) \end{array}\right..$$
\end{proposition}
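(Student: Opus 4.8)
The plan is to work directly with the representation $\ell(g) = \sup_{\mu} [g(\mu) + h_\ell(\mu)]$ furnished by Theorem~\ref{thm: represetation variation}, to compute the density entropy of $S(\ell)$ explicitly, and then to use the uniqueness clause of that theorem to match it against $h_\ell$. Note first that $S(\ell)$ is again an idempotent pressure (since $g\circ T^\sharp\in C(\mathcal{P}(X),\mathbb{R})$ whenever $g$ is, and Axioms A1--A2 are immediate), so it does admit a unique u.s.c. density entropy.

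I would first dispatch the sufficient condition. The key observation is that $\mu$ is $T$-invariant exactly when $T^\sharp(\mu)=\mu$, because $T^\sharp(\mu)=\mu$ unwinds to $\int f\,d\mu = \int f\circ T\,d\mu$ for all $f\in C(X,\mathbb{R})$. Hence, if $h(\mu)=-\infty$ off the invariant set, then
\[
 S(\ell)(g) = \ell(g\circ T^\sharp) = \sup_{\mu\in\mathcal{P}(X)}\,[\,g(T^\sharp\mu)+h(\mu)\,] = \sup_{\mu\ \mathrm{invariant}}\,[\,g(\mu)+h(\mu)\,] = \ell(g),
\]
since the terms with $h(\mu)=-\infty$ drop out and every surviving $\mu$ satisfies $T^\sharp\mu=\mu$. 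This gives $S(\ell)=\ell$ at once.

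For the characterization I would group the supremum defining $S(\ell)$ along the fibers of $T^\sharp$. Writing $\nu=T^\sharp\mu$,
\[
 S(\ell)(g) = \sup_{\mu}\,[\,g(T^\sharp\mu)+h_\ell(\mu)\,] = \sup_{\nu\in T^\sharp(\mathcal{P}(X))}\Big[\,g(\nu) + \sup_{\mu:\,T^\sharp\mu=\nu} h_\ell(\mu)\,\Big],
\]
so $S(\ell)(g)=\sup_\nu[\,g(\nu)+\tilde h(\nu)\,]$, where $\tilde h$ is the fiberwise supremum of $h_\ell$ over $(T^\sharp)^{-1}(\nu)$ (and $-\infty$ off the image). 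To read off $h_{S(\ell)}$ from the uniqueness in Theorem~\ref{thm: represetation variation}, I must verify that $\tilde h$ is u.s.c. Here I would exploit that $\mathcal{P}(X)$ is compact, that $T^\sharp$ is continuous (so its image is compact, hence closed, and each fiber is compact), and that $h_\ell$ is u.s.c.: given $\nu_n\to\nu$ in the image, choose fiber maximizers $\mu_n$ with $h_\ell(\mu_n)=\tilde h(\nu_n)$ (they exist since $h_\ell$ is u.s.c. on a compact fiber), extract a convergent subsequence $\mu_{n_k}\to\mu$, observe $T^\sharp\mu=\nu$ by continuity, and conclude $\limsup_k \tilde h(\nu_{n_k})\le h_\ell(\mu)\le\tilde h(\nu)$; points outside the closed image are handled by a neighbourhood argument giving $\tilde h\equiv-\infty$ there. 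Thus $\tilde h$ is exactly the density entropy $h_{S(\ell)}$.

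Finally, since two idempotent pressures coincide iff their unique u.s.c. density entropies coincide, $S(\ell)=\ell$ is equivalent to $h_{S(\ell)}=h_\ell$, that is, to the displayed fixed-point formula for $h_\ell$. The main obstacle is precisely the semicontinuity step: without it one only recovers the u.s.c. envelope of $\tilde h$, and the clean pointwise formula could fail. The compactness of $\mathcal{P}(X)$ together with the continuity of $T^\sharp$ is exactly what forces the fiberwise supremum to be attained and to remain u.s.c., and this is where I expect the only genuine work to lie.
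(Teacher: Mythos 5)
Your proposal is correct and follows essentially the same route as the paper's proof: the same direct computation for the sufficient condition, the same fiberwise-supremum density $h'$ for $S(\ell)$, the same compactness-plus-fiber-maximizer argument for upper semicontinuity, and the same appeal to uniqueness of the u.s.c. density in Theorem~\ref{thm: represetation variation} to obtain the characterization. No gaps worth noting.
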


\begin{proof}
	
It suppose initially that $h$ is a u.s.c. function satisfying $h(\mu) = -\infty$ if $\mu$ is not $T$-invariant. Define $\ell(g) := \sup_{\mu\in P(X)}g(\mu)+h(\mu)$. We have then
	\[S(\ell)(g) = \ell(g\circ T^{\sharp} ) = \sup_{\mu\in P(X)}g(T^{\sharp}(\mu))+h(\mu). \]
	If $T^{\sharp}(\mu)\neq \mu$ then $h(\mu)=-\infty$ and consequently such $\mu$ does not attain the supremum. It follows that the above supremum is equal to  $\sup_{\mu\in P(X)}g(\mu)+h(\mu)$ which is also equal to $\ell(g)$. This proves that $\ell$ is invariant for $S$.
	
Now we consider an idempotent pressure $\ell$.  Let us denote by $h_{\ell}$ its unique u.s.c. density entropy.  So we have
$\ell(g) = \sup_{\mu \in \mathcal{P}(X)} h_{\ell}(\mu) + g(\mu)$. We start by studying $S(\ell)$.
\[S(\ell)(g) = \ell(g\circ T^{\sharp}) = \sup_{\mu \in \mathcal{P}(X)} h_{\ell}(\mu) + g(T^{\sharp}(\mu)) = \sup_{\nu \in \mathcal{P}(X)} h'(\nu) + g(\nu)\]
where
\[h'(\nu)=\left\{\begin{array}{cl} \sup_{[\mu \in \mathcal{P}(X)\; |\; T^{\sharp}(\mu)=\nu]} h_{\ell}(\mu), &\; \text{if}\, \,\nu \in T^\sharp(\mathcal{P}(X))\\ -\infty & \text{ if} \,\,\nu\notin T^\sharp(\mathcal{P(X)}) \end{array}\right.\]
We claim that $h' \in U(\mathcal{P}(X))$. Indeed, if $h_\ell(\mu)\neq -\infty$ and $T^\sharp(\mu)=\nu$ we have $h'(\nu)\neq -\infty$ and then $\supp(h')\neq \emptyset$. As $T^\sharp$ is continuous, the set $[\mu \in \mathcal{P}(X)\; |\; T^{\sharp}(\mu)=\nu]$ is compact for any $\nu$ and then, if $\nu \in T^\sharp(\mathcal{P}(X))$ there exists $\mu \in (T^\sharp)^{-1}(\nu)$ such that $h_{\ell}(\mu)=h'(\nu)$ (it can be $-\infty$).  In order to show that $h'$ is u.s.c. consider that $\nu_n\to\nu$ in the weak$^*$ topology. We need to show that $\limsup h'(\nu_n)\leq h'(\nu)$. So we can suppose $h'(\nu_n)\neq -\infty$ for all $n$. Let $(\mu_n)$ be such that $T^\sharp(\mu_n)=\nu_n$ and $h_\ell(\mu_n)=h'(\nu_n)$. As the set $\mathcal{P}(X)$ is compact, by taking a subsequence, we can suppose there is $\mu$ such that $\mu_n \to \mu$. By continuity of $T^{\sharp}$ we get $T^\sharp(\mu) = \nu$. Using that $h_\ell$ is u.s.c, we have
\[h'(\nu) \geq h_\ell(\mu) \geq \lim_{n} h_\ell(\mu_n) = \lim_n h'(\nu_n),\]
as claimed.

As $S(\ell)$ has density $h'\in U(\mathcal{P}(X))$ and $\ell$   has density $h_\ell\in U(\mathcal{P}(X))$ from the uniqueness of the density, see Theorem \ref{thm: represetation variation}, we get $S(\ell)=\ell$ iff
$$h_{\ell}(\nu)=\left\{\begin{array}{cl} \sup_{[\mu \in \mathcal{P}(X)\; |\; T^{\sharp}(\mu)=\nu]} h_{\ell}(\mu), &\; \text{if}\, \,\nu \in T^\sharp(\mathcal{P}(X))\\ -\infty, & \text{ if} \,\,\nu\notin T^\sharp(\mathcal{P(X)}) \end{array}\right..$$
\end{proof}

\begin{example} In above proposition, $h$ does not need to be $-\infty$ over non-invariant probabilities in order to get $S(\ell)=\ell$. For example, consider $X=\{
1,2\}$. Then $\mathcal{P}(X)=\{(p,1-p),\, p\in[0,1]\}$. Consider the Shannon entropy $h(p,1-p) = -p\log(p)-(1-p)\log(1-p)$, which satisfies $0\leq h(p,1-p) \leq \log(2)$ for any $p\in[0,1]$. Suppose that $T:\{1,2\}\to\{1,2\}$ satisfies $T(1)=2$ and $T(2)=1$. Then $T^{\sharp}(p,1-p) = (1-p,p)$ and the unique $T-$invariant probability is $(\frac{1}{2},\frac{1}{2})$. Observe, however, that $h(T^\sharp(\mu)) = h(\mu)$ for any $\mu\in\mathcal{P}(X)$, that is $h(p,1-p)=h(1-p,p)$. Consequently if $\ell$ has density entropy $h$ we have
\[\ell(g\circ T^\sharp) = \sup_{\mu\in\mathcal{P}(X)}g(T^\sharp(\mu))+h(\mu) =  \sup_{\mu\in\mathcal{P}(X)}g(T^\sharp(\mu))+h(T^\sharp(\mu)) = \ell(g).\]
In this example
 $$h(\nu)= \sup_{[\mu \in \mathcal{P}(X)\; |\; T^{\sharp}(\mu)=\nu]} h(\mu)$$
corresponds to $h(p,1-p)= h(1-p,p)$.
\end{example}

Now we study a backwards  idempotent pressure function of the previous theorem.

\begin{example} Let $(X,d)$ be a compact metric space and $T:X \to  X$ be a surjective and continuous map such that any point as a finite number of pre-images. Suppose that $L_J:C(X, \mathbb{R}) \to C(X, \mathbb{R})$ is a transfer operator associated to a continuous Jacobian $J:X\to \mathbb{R}$ and consider the dual map $L_J^{*}:  \mathcal{P}(X) \to \mathcal{P}(X)$.  It means that
$$L_J(f)(x)=\sum_{T(y)=x} J(y) f(y), \; f \in C(X, \mathbb{R}),$$
where $\sum_{T(y)=x} J(y)=1,\, \forall x\in X$. Furthermore, $L_J^*(\mu)=\nu$ means that $\int f\,d\nu = \int L_J(f)\,d\mu$.
By Tichonoff-Schauder theorem, there is a probability $\mu_J$ satisfying $L_J^*(\mu_J)=\mu_J$.

We claim that
$T^{\sharp}\circ L_J^{*}(\mu)=\mu, \; \forall \mu \in \mathcal{P}(X)$ (see Lemma 2.4 in \cite{LO1}). Indeed, if $L_J^*(\mu)=\nu$ and $T^\sharp(\nu)=\omega$ then we  have, for any $f\in C(X, \mathbb{R})$,
\[\int f\,d\omega = \int f\circ T \,d\nu = \int \sum_{T(y)=x}J(y)f(T(y))\,d\mu(x) \]\[= \int \sum_{T(y)=x}J(y)f(x)\,d\mu(x)=\int f(x)\,d\mu(x).\]
\end{example}

\begin{proposition}
 Under the setting of above example, consider the map $S:I(\mathcal{P}(X)) \to I(\mathcal{P}(X))$, given by
  $$S(\ell)(g):=\ell(g\circ L_J^{*}) .$$
  If $h$ is a u.s.c function satisfying $h=-\infty$ at any probability which is not invariant for $L_J^*$, then its associated idempotent pressure function is invariant for $S$. More generally, the following sentences concerning an idempotent pressure $\ell$ and its density entropy $h_\ell$ are equivalent:\newline
  i. $\ell$ is invariant for $S$;\newline
  ii. $h_\ell$ satisfies $h_\ell = h_\ell\circ T^\sharp$ and $h_\ell(\nu)=-\infty$ if $\nu\notin L_J^*(\mathcal{P}(X))$;\newline
  iii. $h_\ell$ satisfies $h_\ell = h_\ell\circ L_J^*$ and $h_\ell(\nu)=-\infty$ if $\nu\notin L_J^*(\mathcal{P}(X))$.
\end{proposition}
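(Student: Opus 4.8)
The plan is to avoid any direct manipulation of the functional $S(\ell)$ and instead reduce the whole statement to the uniqueness of the u.s.c.\ density in Theorem~\ref{thm: represetation variation}, by computing the density of $S(\ell)$ explicitly. The structural input I would use first is the identity $T^{\sharp}\circ L_J^{*}=\mathrm{id}$ from the preceding example: it forces $L_J^{*}$ to be injective, hence a homeomorphism of $\mathcal P(X)$ onto the compact (thus closed) set $K:=L_J^{*}(\mathcal P(X))$, with inverse $T^{\sharp}|_{K}$. Writing $\ell(g)=\sup_{\mu}[h_\ell(\mu)+g(\mu)]$ and substituting $\nu=L_J^{*}(\mu)$, so that $\mu=T^{\sharp}(\nu)$, gives
\[ S(\ell)(g)=\ell(g\circ L_J^{*})=\sup_{\mu\in\mathcal P(X)}[h_\ell(\mu)+g(L_J^{*}(\mu))]=\sup_{\nu\in K}[h_\ell(T^{\sharp}(\nu))+g(\nu)]. \]
So the density $h'$ of $S(\ell)$ is $h_\ell\circ T^{\sharp}$ on $K$ and $-\infty$ off $K$. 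I would then verify $h'\in U(\mathcal P(X))$ exactly as in Proposition~\ref{prop: T sharp}, but more cheaply: since $K$ is closed, $T^{\sharp}|_{K}$ continuous and $h_\ell$ u.s.c., the composition $h_\ell\circ T^{\sharp}$ is u.s.c.\ on $K$ and its extension by $-\infty$ across the open set $\mathcal P(X)\setminus K$ stays u.s.c.

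By the uniqueness clause of Theorem~\ref{thm: represetation variation}, $S(\ell)=\ell$ is equivalent to $h'=h_\ell$, that is, to
\begin{equation*}
h_\ell|_{K}=h_\ell\circ T^{\sharp}|_{K}\quad\text{and}\quad h_\ell\equiv-\infty\ \text{on}\ \mathcal P(X)\setminus K. \tag{$\star$}
\end{equation*}
The equivalence $(\star) \Leftrightarrow$ (iii) I would obtain by the change of variable already used: every $\nu\in K$ is $\nu=L_J^{*}(\mu)$ with $\mu=T^{\sharp}(\nu)$, so the first relation in $(\star)$ says $h_\ell(L_J^{*}(\mu))=h_\ell(\mu)$ for all $\mu$, i.e.\ $h_\ell=h_\ell\circ L_J^{*}$, while the second is the support condition; this settles (i) $\Leftrightarrow$ (iii). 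The implication (ii) $\Rightarrow$ (iii) is then immediate from $T^{\sharp}\circ L_J^{*}=\mathrm{id}$: evaluating $h_\ell=h_\ell\circ T^{\sharp}$ at $L_J^{*}(\mu)$ gives $h_\ell(L_J^{*}(\mu))=h_\ell(T^{\sharp}L_J^{*}(\mu))=h_\ell(\mu)$. Finally, the first (sufficient-condition) assertion is the special case in which $\supp(h)$ is contained in the fixed-point set of $L_J^{*}$, where $(\star)$ holds for trivial reasons.

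The hard part will be closing the cycle, i.e.\ recovering the global identity (ii), $h_\ell=h_\ell\circ T^{\sharp}$, from (iii). The natural device is the idempotent retraction $P:=L_J^{*}\circ T^{\sharp}$, which satisfies $P^{2}=P$, has image $K$ and acts as the identity on $K$; under (iii) one checks $h_\ell\circ T^{\sharp}=h_\ell\circ P$, so (ii) reduces to $h_\ell\circ P=h_\ell$. On $K$ this is automatic, and on $\mathcal P(X)\setminus K$ it is equivalent to the assertion $h_\ell(T^{\sharp}(\nu))=-\infty$ for every $\nu\notin K$. This is precisely the delicate point: because $T^{\sharp}$ is many-to-one, a probability $\nu\notin K$ can be carried by $T^{\sharp}$ onto a point of $K$ on which the density is finite, so the bare identity $T^{\sharp}\circ L_J^{*}=\mathrm{id}$ does not by itself suffice. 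Here I would try to exploit the standing hypothesis that $T$ has only finitely many preimages, together with upper semicontinuity, to control the $T^{\sharp}$-fibres over $\supp(h_\ell)\subset K$; I expect this to be the step requiring the most care.
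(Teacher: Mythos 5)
Your core mechanism is correct and is essentially the paper's own: you compute the density of $S(\ell)$, namely $h'=h_\ell\circ T^{\sharp}$ on $K=L_J^{*}(\mathcal P(X))$ and $h'=-\infty$ off $K$, check $h'\in U(\mathcal P(X))$, and invoke the uniqueness clause of Theorem~\ref{thm: represetation variation}. The paper runs (iii)$\Rightarrow$(i) as a direct substitution and (i)$\Rightarrow$(ii) through exactly this $h'$; you route everything through the single equivalence (i)$\Leftrightarrow$($\star$), which is tidier but not substantively different. Your proofs of ($\star$)$\Leftrightarrow$(iii), of (ii)$\Rightarrow$(iii), and of the sufficient condition (support in the fixed-point set of $L_J^{*}$, using injectivity of $L_J^{*}$) are all sound.

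The gap is the step you flagged as ``the hard part'', and you should trust your own diagnosis more than you did: recovering the global identity $h_\ell=h_\ell\circ T^{\sharp}$ from (i) or (iii) is not merely delicate, it is impossible, so the strategy you sketch (finite preimages plus upper semicontinuity) cannot succeed. Concretely, take $X=\{1,2\}^{\mathbb N}$, $T=\sigma$, $J\equiv 1/2$, and let $h_\ell$ be $0$ at the Bernoulli $(1/2,1/2)$ measure $\mu_B$ and $-\infty$ elsewhere. Since $L_J^{*}(\mu_B)=\mu_B$ and $L_J^{*}$ is injective, both (i) and (iii) hold: indeed $\ell(g)=g(\mu_B)$ and $S(\ell)(g)=g(L_J^{*}\mu_B)=\ell(g)$. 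But if $\nu$ is the image of $\mu_B$ under $x\mapsto 1x$, then $T^{\sharp}\nu=\mu_B$ while $\nu\neq\mu_B$, so $h_\ell(\nu)=-\infty<0=h_\ell(T^{\sharp}\nu)$: condition (ii), read literally as an identity on all of $\mathcal P(X)$, fails. What is true---and what the paper's own proof actually establishes, since its conclusion $h'=h_\ell$ only yields $h_\ell=h_\ell\circ T^{\sharp}$ at points of $K$---is the restricted statement, which is precisely your $(\star)$. So the honest resolution is that (ii) must be read as an identity on $L_J^{*}(\mathcal P(X))$ only; under that reading your argument is already complete, and under the literal reading the implication (i)$\Rightarrow$(ii) claimed by the proposition is false, so no completion of your last step exists.
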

\begin{proof}
	The first part of the proof follows the same lines than the proof of Proposition \ref{prop: T sharp}, replacing $T^\sharp$ by $L_J^*$ in that proof.

  Let us write $$\ell(g)= \max_{\mu \in \mathcal{P}(X)} h_{\ell}(\mu) + g(\mu)$$ where $h_\ell$ is u.s.c.

  (iii. $\Rightarrow$ i.) Suppose that $h_{\ell}(\mu)=h_\ell(L_J^*(\mu))$ for any $\mu\in \mathcal{P}(X)$ and $h_\ell(\nu)=-\infty$ if $\nu \notin  L_J^*(\mathcal{P}(X))$.
  We have
   \[S(\ell)(g) = \ell(g\circ L_J^{*}) = \max_{\mu \in \mathcal{P}(X)} h_{\ell}(\mu) + g(L_J^*(\mu))=\max_{\mu \in \mathcal{P}(X)} h_{\ell}(L_J^*(\mu)) + g(L_J^*(\mu))\]\[=\max_{\nu \in L_J^*(\mathcal{P}(X))} h_{\ell}(\nu) + g(\nu)=\max_{\nu \in \mathcal{P}(X)} h_{\ell}(\nu) + g(\nu),\]
   where in last equality we use that $h_\ell(\nu)=-\infty$ if $\nu \notin  L_J^*(\mathcal{P}(X))$ and then such $\nu$ does not attain the maximum. This proves that $\ell$ is $S$-invariant.

   (i. $\Rightarrow$ ii.) Suppose that  $\ell(g)=\ell(g\circ L_J^{*})$, for any $g\in C(\mathcal{P}(X),\mathbb{R})$.
 Observe that
   $$\ell(g\circ L_J^{*})   =\max_{\mu \in \mathcal{P}(X)} h_{\ell}(\mu) + g(L_J^{*}(\mu))$$ $$ =\max_{\mu \in \mathcal{P}(X)} h_{\ell}(T^{\sharp}(L_J^{*}(\mu)) + g(L_J^{*}(\mu)) =  \max_{\nu \in \mathcal{P}(X)} h'(\nu) + g(\nu),$$
where
\[h'(\nu):=
\left\{
  \begin{array}{ll}
    h_{\ell}(T^{\sharp}(\nu)), &  \nu \in L_J^{*}(\mathcal{P}(X)) \\
    -\infty, & \text{ otherwise.}
  \end{array}
\right.
\]
Since $L_J^{*}(\mathcal{P}(X))$ is closed we obtain that $h'$ is u.s.c. Thus $h'$ is the density of $S(\ell)$.
Since $S(\ell)=\ell$, from the uniqueness of the density $h_{\ell}$, we conclude that $h'=h_{\ell}$.

(ii. $\Rightarrow$ iii.) For any $\mu$ we have $h_\ell(L_J^*(\mu)) = h_\ell(T^{\sharp}(L_J^*(\mu)))= h_{\ell}(\mu)$.
\end{proof}

Next theorem consider a generalization of above one for the case of multiple Jacobians in the context of symbolic dynamics. Observe that for a unique Lipschitz Jacobian in symbolic dynamics, if we iterate the above process, we get that $h_{\ell}(\mu) \neq -\infty$ just if $\mu \in (L_J^{*})^n(\mathcal{P}(X)) $ for any $n$. There exists a unique such $\mu$ and it is also the unique invariant probability for $L_J^*$ (eigen-probablity).

We consider  on $\Omega=\{1,2,...,d\}^\mathbb{N} $ the distance $d_\gamma$ defined by \begin{equation} \label{alfa}d_\gamma( x, y) = \gamma^{ i(x,y)},
\end{equation}
where $0<\gamma <\frac{1}{d+1},$ $x =(x_0,x_1,x_2,...), y=(y_0,y_1,y_2,...)$ and $i(x,y)= $ min $\{j\in \mathbb{N} , x_j \neq y_j\}$. In this case the diameter of $\Omega$ is equal to $1$. A cylinder is a subset of $\Omega$ in the form
\[[x_1,...,x_n] = \{(y_1,y_2,y_3,...)\in\Omega\,|,y_1=x_1,y_2=x_2,...,y_n=x_n\}.\]

\begin{theorem}\label{thm: IFS invariant variation on shift}
  Consider the full shift $\sigma$ over the space $(\Omega,d_\gamma)$, where $\Omega=\{1,...,d\}^{\mathbb{N}}$ and $0<\gamma <\frac{1}{d+1}$. Let $$\mathcal{D}=\{J:\Omega \to [0,1]\,|\,\operatorname{Lip}(J)\leq 1\, \,\text{and}\, \,\sum_{a=1}^{d}J(ax)=1\,\forall x\in \Omega\}.$$  Consider, for each $J\in \mathcal{D}$, the dual map $L^{*}_{J}:  \mathcal{P}(X) \to \mathcal{P}(X)$.\newline
  1.  For each sequence $(J_1,J_2,J_3...)$ of elements of $\mathcal{D}$ there exists a unique probability $\mu\in \mathcal{P}(X)$ such that  $$\mu = \lim_{n \to \infty} L^{*}_{J_1}\circ\dots\circ L^{*}_{J_n}(\nu ),$$ for any probability $\nu$. \newline
2.   Consider $\mathcal{D}$ as a metric space with the supremum norm and for a fixed closed subset $D\subseteq \mathcal{D}$ let $q: D \to \mathbb{R}$ be a continuous function, such that, $\sup_{J \in D} q_{J} =0$.
  Consider the operator $\mathcal{M}: I(P(X)) \to I(P(X)),$ acting over idempotent pressures, which is given by
 \begin{equation} \label{prew}	\mathcal{M}(\ell)(f):= \bigoplus_{J \in \mathcal{D}} q_{J}\odot\ell( f\circ L^{*}_{J}).
 \end{equation}
 There exists a unique idempotent pressure function $\ell$ satisfying $\ell(0)=0$ and invariant for $\mathcal{M}$. Its density entropy is given by
  $$h_{\ell}(\mu)= \bigoplus_{\begin{array}{c}(J_1,J_2,J_3,...)\in D^{\mathbb{N}}\, such \, that\\ L^{*}_{J_1}\circ\dots\circ L^{*}_{J_n}\to \mu\end{array}  }[q_{J_1}+q_{J_2}+q_{J_3}+...]$$
  and $h_{\ell}(\mu)=-\infty$ if there is not such sequence $(J_1,J_2,J_3,...)$.
\end{theorem}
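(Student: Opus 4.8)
The plan is to reduce everything to two facts: a uniform contraction estimate for the dual operators $L_J^\ast$ on $(\mathcal P(\Omega),W_1)$, and the translation of $\mathcal M$-invariance into a fixed-point equation for the density entropy. For the first assertion I would first show that every $L_J^\ast$ is a contraction of $(\mathcal P(\Omega),W_1)$ with a constant $c<1$ that does not depend on $J$. Writing $L_J f(x)=\sum_{a=1}^d J(ax)f(ax)$ and using $d_\gamma(ax,ax')=\gamma\,d_\gamma(x,x')$, $\sum_a J(ax)=1$, $\operatorname{Lip}(J)\le1$ and $\operatorname{diam}(\Omega)=1$, one gets for any Lipschitz $f$ normalized so that $\|f\|_\infty\le 1$ (which is harmless in $W_1$ because $L_J$ fixes constants) the bound $\operatorname{Lip}(L_J f)\le \gamma\operatorname{Lip}(f)+\gamma d\|f\|_\infty\le\gamma(d+1)=:c<1$, the strict inequality being exactly the hypothesis $\gamma<\frac1{d+1}$. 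Since $W_1(L_J^\ast\mu,L_J^\ast\nu)=\sup_{\operatorname{Lip}(f)\le1}\big(\int L_Jf\,d\mu-\int L_Jf\,d\nu\big)$ by \eqref{WW1}, this gives $W_1(L_J^\ast\mu,L_J^\ast\nu)\le c\,W_1(\mu,\nu)$. Then, for a fixed sequence $(J_k)$, the diameter bound $\operatorname{diam}\mathcal P(\Omega)\le1$ yields $W_1\big(L^\ast_{J_1}\!\cdots L^\ast_{J_{n+1}}\nu,\,L^\ast_{J_1}\!\cdots L^\ast_{J_n}\nu\big)\le c^{\,n}$, so the compositions are Cauchy and converge to a limit $\mu$; applying the same estimate to two starting points shows the limit is independent of $\nu$, proving assertion~1. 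I record for later use the coding map $\pi(\mathbf J):=\lim_n L^\ast_{J_1}\!\cdots L^\ast_{J_n}\nu$, which is continuous (the convergence being uniform in $\mathbf J$, and $(J,\mu)\mapsto L_J^\ast\mu$ continuous since $\|L_J-L_{J'}\|\le d\,\|J-J'\|_\infty$), and satisfies the equivariance $\pi(\mathbf J)=L^\ast_{J_1}\big(\pi(\sigma\mathbf J)\big)$.

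For assertion~2, I would first check that $\mathcal M$ in \eqref{prew} (with the supremum taken over $J\in D$, on which $q$ is defined) sends idempotent pressures to idempotent pressures and preserves the normalization: axioms A1 and A2 follow because $(c\odot f)\circ L_J^\ast=c\odot(f\circ L_J^\ast)$ and $\sup_J\big[q_J+\max(a_J,b_J)\big]=\max\big(\sup_J(q_J+a_J),\sup_J(q_J+b_J)\big)$, while $\mathcal M(\ell)(0)=\sup_{J\in D}q_J=0$. Writing $\ell(f)=\sup_\mu[f(\mu)+h_\ell(\mu)]$ and changing variables $\nu=L_J^\ast\mu$ gives that the density of $\mathcal M(\ell)$ is
\[
h'(\nu)=\sup\big\{\,q_J+h_\ell(\mu)\ :\ J\in D,\ \mu\in\mathcal P(\Omega),\ L_J^\ast\mu=\nu\,\big\},
\]
with $h'(\nu)=-\infty$ on the complement of the (closed) set $\bigcup_{J\in D}L_J^\ast(\mathcal P(\Omega))$. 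A small lemma I will prove and reuse is that if $K$ is compact, $p\colon K\to\mathcal P(\Omega)$ is continuous and $\Phi\colon K\to\mathbb R_{\max}$ is u.s.c., then $\nu\mapsto\sup_{p^{-1}(\nu)}\Phi$ is u.s.c. (take maximizers on the compact fibers and pass to a convergent subsequence). Applied with $K=D\times\mathcal P(\Omega)$, $p(J,\mu)=L_J^\ast\mu$ and $\Phi(J,\mu)=q_J+h_\ell(\mu)$ this shows $h'$ is u.s.c., so by the uniqueness of the density in Theorem~\ref{thm: represetation variation}, $\mathcal M(\ell)=\ell$ is equivalent to the fixed-point equation
\begin{equation}\label{starfix}
h_\ell(\nu)=\sup\big\{\,q_J+h_\ell(\mu)\ :\ J\in D,\ L_J^\ast\mu=\nu\,\big\}.
\end{equation}

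Existence then amounts to checking that the stated formula $h(\mu)=\sup\{\Sigma(\mathbf J):\pi(\mathbf J)=\mu\}$, where $\Sigma(\mathbf J):=\sum_k q_{J_k}$, is u.s.c., has $\sup_\mu h=0$, and solves \eqref{starfix}. Upper semicontinuity comes from the same lemma with $K=D^{\mathbb N}$ (compact, since $D$ is a closed subset of the equicontinuous, uniformly bounded, hence compact family $\mathcal D$), $p=\pi$ continuous, and $\Sigma=\inf_N\sum_{k\le N}q_{J_k}$ u.s.c. as a decreasing limit of continuous functions ($q_J\le0$). That $\sup h=0$ follows from $\sup_{J\in D}q_J=0$ by choosing $q_{J_k}\ge-\varepsilon2^{-k}$, and \eqref{starfix} is a direct consequence of the equivariance $\pi(\mathbf J)=L^\ast_{J_1}(\pi(\sigma\mathbf J))$ together with the bijection between codings of $\nu$ and pairs $(J,\text{coding of a preimage }\mu)$. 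The \textbf{main obstacle is uniqueness}, where the naive estimate only shows $\mathcal M$ is non-expansive. Given another u.s.c. solution $\tilde h$ of \eqref{starfix} with $\sup\tilde h=0$, the inequality $\tilde h\le h$ follows by iterating \eqref{starfix} $n$ times, using $\tilde h\le0$, and extracting (by compactness of $D^{\mathbb N}$ and u.s.c.\ of $\Sigma$ together with the bound $W_1(\mu,\pi(\mathbf J^{(n)}))\le c^{\,n}$) a coding of $\mu$ of weight $\ge\tilde h(\mu)$. For the reverse inequality I intend to use $\ell(0)=0$ crucially: u.s.c.\ of $\tilde h$ on the compact space $\mathcal P(\Omega)$ gives a maximizer $\mu^\star$ with $\tilde h(\mu^\star)=0$, and attainment in \eqref{starfix} produces a coding $\mathbf J^\star$ with $\pi(\mathbf J^\star)=\mu^\star$. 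Then, for an arbitrary coding $\mathbf J$ of $\mu$, I reroute its tail and set $\mu_n':=\pi(J_1,\dots,J_n,\mathbf J^\star)=L^\ast_{J_1}\!\cdots L^\ast_{J_n}(\mu^\star)$; the contraction gives $W_1(\mu_n',\mu)\le c^{\,n}\to0$, while \eqref{starfix} along this concatenation yields $\tilde h(\mu_n')\ge\sum_{k\le n}q_{J_k}+\tilde h(\mu^\star)\ge\Sigma(\mathbf J)$. Upper semicontinuity of $\tilde h$ at $\mu$ then forces $\tilde h(\mu)\ge\limsup_n\tilde h(\mu_n')\ge\Sigma(\mathbf J)$, hence $\tilde h\ge h$, and therefore $\tilde h=h$. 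This rerouting step, converting the spatial contraction of assertion~1 and the normalization $\ell(0)=0$ into the missing lower bound, is the heart of the argument.
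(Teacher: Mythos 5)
Your proposal is correct, and its first half coincides with the paper's own preparation: your bound $\operatorname{Lip}(L_J f)\le \gamma\operatorname{Lip}(f)+\gamma d\,\|f\|_\infty\le(d+1)\gamma=:r<1$ and the resulting uniform contraction of $\mu\mapsto L_J^*\mu$ on $(\mathcal{P}(\Omega),W_1)$ are exactly Proposition~\ref{eewx1}, and your continuity of $(J,\mu)\mapsto L_J^*\mu$ is Propositions~\ref{eewx2} and~\ref{eewx31} (you also correctly read the index set in \eqref{prew} as $D$, on which $q$ is defined). Where the two arguments genuinely diverge is afterwards: the paper stops at the contraction estimates, observes that the family $\phi_J=L_J^*$ is a uniformly contractible max-plus IFS on $(\mathcal{P}(X),W_1)$, and obtains both assertions by citing Lemma 4.1 and Theorem 4.7 of \cite{MO1}; you instead rebuild that machinery in this concrete case, proving assertion 1 by a Cauchy-sequence argument, constructing the coding map $\pi$ and its equivariance, computing the density of $\mathcal{M}(\ell)$ and converting $\mathcal{M}(\ell)=\ell$ into a fixed-point equation for $h_\ell$ via the uniqueness in Theorem~\ref{thm: represetation variation} (the same device the paper uses in Proposition~\ref{prop: T sharp}), checking existence through the explicit series formula, and proving uniqueness by the rerouting/concatenation argument that exploits $\ell(0)=0$, the contraction, and upper semicontinuity. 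The paper's route buys brevity and generality (the cited results of \cite{MO1} also cover place-dependent weights $q_J(\mu)$, as its final remark notes); your route buys a self-contained proof in which the role of each hypothesis --- $\sup_{J\in D}q_J=0$, compactness of $D$, the normalization $\ell(0)=0$ --- is visible, and in particular the uniqueness mechanism is explicit rather than delegated to a reference. One detail you should write out carefully in the direction $\tilde h\le h$: when you extend the finite words $(J_1^{(n)},\dots,J_n^{(n)})$ obtained by iterating the fixed-point equation to elements of $D^{\mathbb{N}}$, an arbitrary extension can make the total weight $\Sigma(\mathbf{J}^{(n)})=-\infty$, so upper semicontinuity of $\Sigma$ applied along the sequence $\mathbf{J}^{(n)}$ gives nothing; either extend by tails with $q_{J_{n+k}}\ge -\varepsilon 2^{-k}$ (such $J$ exist because $\sup_{J\in D}q_J=0$), or argue coordinatewise on the limit word $\mathbf{J}^{\infty}$, noting that each partial sum satisfies $\sum_{k\le m}q_{J_k^{\infty}}=\lim_n\sum_{k\le m}q_{J_k^{(n)}}\ge\tilde h(\mu)$, whence $\Sigma(\mathbf{J}^{\infty})=\inf_m\sum_{k\le m}q_{J_k^{\infty}}\ge\tilde h(\mu)$. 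With that point made precise, your argument is sound.
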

Equation \eqref{prew} can be seen as an IFS idempotent form of the dual of the Ruelle operator.

\begin{example} Suppose that $J_1$ and $J_2$ are two Jacobians in the set $\mathcal{D}$. in this way $D=\{J_1,J_2\}$. Suppose that $q_{J_1}=q_{J_2}=0$.
	 Then we are considering the operator $\mathcal{M}$, acting over idempotent pressures, which satisfies
		\[		\mathcal{M}(\ell)(f):= \ell( f\circ L^{*}_{J_1})\oplus \ell( f\circ L^{*}_{J_2}) .\]
		There exists a unique idempotent pressure function $\ell$ satisfying $\ell(0)=0$ and invariant for $\mathcal{M}$. Its density entropy is given by $h_\ell(\mu)=0$ if there is a sequence $(i_1,i_2,i_3,...)\in\{0,1\}^{\mathbb{N}}$ such that $\mu = \lim_{n \to \infty} L^{*}_{J_{i_1}}\circ\dots\circ L^{*}_{J_{i_n}}(\cdot)$ and $h_\ell(\mu)=-\infty$ if there is not such a sequence. 	
\end{example}

\begin{example} Suppose $\Omega=\{1,2\}^{\mathbb{N}}$ and for each $p\in[0,1]$ let $J_p$ be defined by $J_p(1,x_1,x_2,...)=p$ and $J_p(2,x_1,x_2,...)=1-p$. Let $D=\{J_p\,|\,p\in [0,1]\}\subset \mathcal{D}$. We denote also $P_p(1)=J_p(1)=p$ and $P_p(2)=J_p(2)=1-p.$
Fix a sequence $(J_{p_1},J_{p_2},J_{p_3},...)$ and let $\mu$ be such that
 $$L^{*}_{J_{p_1}}\circ\dots\circ L^{*}_{J_{p_n}}(\cdot ) \to \mu.$$
We want to characterize such $\mu$. Let us claim that $\mu$ satisfies $$\mu[x_1,...,x_n] = P_{p_1}(x_1)\cdots P_{p_n}(x_n),$$ for any cylinder set $[x_1,...,x_n]$ (this is a generalization of the concept of Bernoulli probabiliy, because the weight $(p_i,1-p_i)$ is not the same for each coordinate $i$).  In order to verify that $\mu$ satisfies such property we compute as below.\newline
- if $f_1=I_{[x_1]}$ we have, for any $\nu$:
\[\int f_1 d L^{*}_{J_{p_1}}(\nu) = \int L_{J_{p_1}}(f_1)\,d\nu = \int J_{p_1}(1x)f_1(1x)+J_{p_1}(2x)f_1(2x) \,d\nu (x) \]
\[= p_1I_{[x_1]}(1) + (1-p_1)I_{[x_1]}(2)=P_{p_1}(x_1).\]
It follows that $\mu[x_1] = P_{p_1}(x_1)$.\newline
- if $f_2=I_{[x_1,x_2]}$ we have, for any $\nu$:
\[\int f_2\, d (L^{*}_{J_{p_1}}\circ L^{*}_{J_{p_2}} (\nu)) = \int L_{J_{p_1}}(f_2)\,dL^{*}_{J_{p_2}}\,d\nu = \int L_{J_{p_2}}\circ L_{J_{p_1}}(f_2)\,d\nu\]
\[=\int \sum_{i_2=1}^{2} J_{p_2}(i_2)(L_{J_{p_1}}f_2)(i_2x) \,d\nu (x) =\int \sum_{i_2=1}^{2} J_{p_2}(i_2)(\sum_{i_1=1}^{2}J_{p_1}(i_1)f_2(i_1i_2x)) \,d\nu (x)\]\[=\sum_{i_2=1}^{2} J_{p_2}(i_2)(\sum_{i_1=1}^{2}J_{p_1}(i_1)f_2(i_1i_2))
=\sum_{i_1,i_2}J_{p_1}(i_1)J_{p_2}(i_2)I_{[x_1,x_2]}(i_1,i_2)     =P_{p_1}(x_1)\cdot P_{p_2}(x_2).\]
In general, if $f_n=I_{[x_1,x_2,...,x_n]}$ we have, for any $\nu$:
 \[\int f_n \,d L^{*}_{J_{p_1}}\circ\cdots \circ L^{*}_{J_{p_n}} (\nu) =  \int L_{J_{p_n}}\circ\cdots \circ L_{J_{p_1}}(f_n)\,d\nu\]
 \[=\int \sum_{i_1,...,i_n} J_{p_n}(i_n)\cdots J_{p_1}(i_1)I_{[x_1,...,x_n]}(i_1,...,i_n) =P_{p_1}(x_1)\cdots P_{p_n}(x_n).\]

Such $\mu$ usually is not $\sigma$-invariant. Indeed, a probability $\nu$ is invariant if it satisfies, for any cylinder set $[x_1,...,x_n]$,
\[\sum_{i=1}^{2}\nu[i,x_1,x_2,...,x_n] = \nu[x_1,x_2,...,x_n].\]
For the above defined $\mu$ we have
\[\sum_{i=1}^{2}\mu[i,x_1,x_2,...,x_n] =\sum_{i=1}^{2}P_{p_1}(i)P_{p_2}(x_1)\cdots P_{p_{n+1}}(x_n) = P_{p_2}(x_1)\cdots P_{p_{n+1}}(x_n)\]
which is different of $\mu[x_1,x_2,...,x_n]=P_{p_1}(x_1)\cdots P_{p_{n}}(x_n)$. We get the equality for all cylinders just in the case $p_1=p_2=p_3=...$, that is $\mu$ is the Bernoulli probability.
\end{example}

\subsection{Proof of Theorem \ref{thm: IFS invariant variation on shift}}
In order to prove Theorem~\ref{thm: IFS invariant variation on shift} we need to consider several concepts  and prove some technical preparatory results. In particular, we need some stability result showing how the image of a transfer operator acting in probabilities change under small variations of the parameter. The proof follows from adapting results described in  \cite{Hut} and \cite{Micu} to our level-2 context.

For a Lipschitz functions $w:\Omega\to\mathbb{R}$, denote $|w|_{\gamma}= \sup_{x \neq y} \frac{|w(x)- w(y)|}{d_\gamma(x,y)},$
which is called the Lipschitz  constant of $w$, and
$||w||_\infty=\sup_{x\in \Omega}|w(x)|,$
which is the supremum norm of $w$.

Consider the set  $$\mathcal{D}=\{J:\Omega \to [0,1]\,|\,|J|_\gamma\leq 1\, \,\text{and}\, \,\sum_{a=1}^dJ(ax)=1\,\forall x\in \Omega\}.$$ It is a compact metric space for the supremum norm $||\cdot||_\infty$. Indeed,
as the potentials $J\in\mathcal{D}$ are uniformly bounded and have Lispchitz constant smaller or equal than $1$, any sequence has a convergent subsequence  by Arzela-Ascoli Theorem. Furthermore the limit function is in  $\mathcal{D}$.

\begin{definition} We define the $1$-Wasserstein distance on $\mathcal{P}(\Omega)$ by
	\begin{equation} \label{WW11} W_{1}(\mu,\nu)= \sup_{|f|_{\gamma}\leq 1} \mu(f)-\nu(f).
	\end{equation}
\end{definition}
Observe that for any constant $c$ we get $|f+c|_{\gamma}=|f|_\gamma$ and $\mu(f+c)-\nu(f+c) = \mu(f)-\nu(f)$. Then we can suppose that $\inf\{f(x)\,|\,x\in\Omega\}=0$. Consequently, as $diam(\Omega)=1$, we get also $\sup\{f(x)\,|\,x\in\Omega\}\leq 1$. Consequently we can suppose $0\leq f\leq 1$ for the computation of $W_1$.

\begin{proposition} \label{eewx1} For any $J\in\mathcal{D}$ and any $\mu,\nu \in \mathcal{P} (\Omega)$,
	\begin{equation} \label{ewq123}W_{1}(L_{J}^* ( \mu), L_{J}^* ( \nu))\leq\,r\cdot  W_{1}( \mu,  \nu),
	\end{equation}
where $r=(d+1)\gamma<1$.	
\end{proposition}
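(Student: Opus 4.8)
The plan is to reduce the estimate to a contraction property of the transfer operator $L_J$ at the level of Lipschitz seminorms. By the duality defining $L_J^*$ and the definition~\eqref{WW11} of $W_1$, I would first rewrite
\begin{equation*}
W_{1}(L_{J}^* \mu, L_{J}^* \nu) = \sup_{|f|_\gamma \le 1} \left[ \int L_J f\, d\mu - \int L_J f\, d\nu\right].
\end{equation*}
By the remark following~\eqref{WW11} (which applies to any Wasserstein computation, here with the measures $L_J^*\mu$ and $L_J^*\nu$), I may restrict the supremum to test functions normalized so that $0\le f\le 1$, while keeping $|f|_\gamma\le 1$. Thus the whole problem is to control the Lipschitz behaviour of $L_J f$ for such $f$.

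The key step is the claim that for every $J\in\mathcal{D}$ and every $f$ with $0\le f\le 1$ and $|f|_\gamma\le 1$, the image satisfies $|L_J f|_\gamma \le (d+1)\gamma = r$. To prove it I fix $x\ne x'$ and, using $L_J f(x)=\sum_{a=1}^d J(ax)f(ax)$, split by a discrete product rule:
\begin{equation*}
L_J f(x) - L_J f(x') = \sum_{a=1}^d J(ax)\,[f(ax) - f(ax')] + \sum_{a=1}^d [J(ax) - J(ax')]\, f(ax').
\end{equation*}
The crucial metric input is that prepending a common symbol contracts distances: since the disagreement of $ax$ and $ax'$ is shifted one coordinate forward, $i(ax,ax')=i(x,x')+1$, hence $d_\gamma(ax,ax')=\gamma\, d_\gamma(x,x')$. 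The first sum is then bounded, using $J(ax)\ge 0$, $\sum_a J(ax)=1$ and $|f|_\gamma\le 1$, by $\sum_a J(ax)\,\gamma\, d_\gamma(x,x') = \gamma\, d_\gamma(x,x')$. The second sum is bounded, using $|J|_\gamma\le 1$ and $0\le f\le 1$, by $\sum_a |J(ax)-J(ax')|\,|f(ax')| \le d\,\gamma\, d_\gamma(x,x')$. Adding the two contributions gives $|L_J f(x)-L_J f(x')|\le (d+1)\gamma\, d_\gamma(x,x')$, which is the claim.

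Finally I would conclude by plugging the claim back into the supremum. Given an admissible $f$, set $g = \tfrac{1}{r} L_J f$, so that $|g|_\gamma\le 1$; then $\int g\,d\mu - \int g\,d\nu \le W_1(\mu,\nu)$, i.e. $\int L_J f\, d\mu - \int L_J f\, d\nu \le r\, W_1(\mu,\nu)$. Taking the supremum over all such $f$ yields $W_{1}(L_{J}^* \mu, L_{J}^* \nu)\le r\, W_{1}(\mu,\nu)$, and $r=(d+1)\gamma<1$ by the standing hypothesis $\gamma<\tfrac{1}{d+1}$. I expect the only delicate point to be the second sum in the decomposition: unlike the first, it is not tamed by the normalization $\sum_a J(ax)=1$, so one genuinely needs both the uniform bound $0\le f\le 1$ (which is exactly why the reduction in the first paragraph is essential) and the factor $d$ coming from the number of preimages under the full shift. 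This is precisely the mechanism that produces the constant $(d+1)$, and makes the contraction hypothesis $\gamma<\tfrac{1}{d+1}$ the natural threshold.
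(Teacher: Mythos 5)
Your proof is correct and follows essentially the same route as the paper: the same product-rule decomposition of $L_Jf(x)-L_Jf(x')$, the same bounds ($\gamma$ from the normalization $\sum_a J(ax)=1$ for the $f$-difference term, $d\gamma$ from $|J|_\gamma\le 1$ and the uniform bound on $f$ for the $J$-difference term), and the same rescaling $g=\tfrac{1}{r}L_Jf$ to conclude. The only cosmetic difference is which factor you freeze in the splitting, which is immaterial.
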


This shows that $\mu \to L_{J}^* (\mu)$ is a contraction on $\mathcal{P} (\Omega)$ for the metric $W_{1}$, with a constant $r$ which is independent of $J \in\mathcal{D}$.

\begin{proof} 	We initially claim that $|L_{J}(f)|_{\gamma}\leq (d+1)\cdot\gamma\cdot |f|_\gamma$, for any Lipschitz function $f$ satisfying $\inf\{f(x)\,|\,x\in\Omega\}=0$ (and consequently $\sup\{f(x)\,|\,x\in\Omega\}\leq|f|_\gamma$).
	Indeed, for $x\neq y$ we have
	\[ \frac{|L_{J}(f)(x)-L_{J}(f)(y)|}{d_\gamma(x,y)} = \frac{|\sum_{a=1}^{d}J(ax)f(ax)-\sum_{a=1}^{d}J(ay)f(ay)|}{d_\gamma(x,y)} \]
	\[\leq \frac{|\sum_{a=1}^{d}J(ax)f(ax)-\sum_{a=1}^{d}J(ay)f(ax)| +|\sum_{a=1}^{d}J(ay)f(ax)-\sum_{a=1}^{d}J(ay)f(ay)|}{d_\gamma(x,y)} \]
	\[\leq \frac{|\sum_{a=1}^{d}[J(ax)-J(ay)]f(ax)| +|\sum_{a=1}^{d}J(ay)[f(ax)-f(ay)]|}{d_\gamma(x,y)} \]
	\[\leq d \gamma |J|_\gamma ||f||_\infty +\gamma |f|_\gamma
\leq d \gamma |f|_\gamma +\gamma |f|_\gamma = (d+1)\gamma|f|_\gamma .\]
Now for any Lipschitz function $f$ satisfying $|f|_\gamma \leq 1$ and $0\leq f\leq 1$, as $|L_J(f)|_\gamma \leq r$ where $r=(d+1)\cdot\gamma$,  we get
\[L_{J}^* ( \mu)(f) -  L_{J}^* ( \nu)(f) = \mu(L_J(f)) - \nu(L_J(f)) \leq \sup_{|g|_\gamma \leq r} \mu(g)-\nu(g)\]
\[ =\sup_{|h|_\gamma \leq 1} \mu(r\cdot h)-\nu(r\cdot h)=r\cdot W_{1}( \mu,  \nu). \]
Taking $\sup_{|f|_\gamma\leq 1}$ in the left hand side we conclude the proof.

	\end{proof}

\begin{proposition} \label{eewx2} For any probability $\mu\in \mathcal{P} (\Omega)$ and $J_1,J_2 \in \mathcal{D},$
	\begin{equation} \label{ewq12349}W_{1}({L}_{J_1}^* ( \mu), {L}_{J_2}^* (\mu))\leq d\, ||J_1- J_2||_\infty.
	\end{equation}
\end{proposition}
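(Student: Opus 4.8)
The plan is to mimic exactly the structure of the proof of Proposition~\ref{eewx1}: reduce the Wasserstein distance to a supremum over a normalized family of test functions, transport the difference of the two pushforwards back to the level of the transfer operators via the duality $L_J^*(\mu)(f)=\mu(L_J(f))$, and then estimate the difference $L_{J_1}(f)-L_{J_2}(f)$ pointwise. First I would invoke the reduction recorded after the definition \eqref{WW11}: since adding a constant alters neither $|f|_\gamma$ nor $\mu(f)-\nu(f)$, and since $\operatorname{diam}(\Omega)=1$, it suffices to take the supremum defining $W_1$ over test functions $f$ with $|f|_\gamma\le 1$ and $0\le f\le 1$. This normalization is the only place where the geometry of $\Omega$ enters, and it is precisely what forces every value $f(ax)$ to lie in $[0,1]$.

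For such an $f$, the duality defining $L_J^*$ gives
\[
L_{J_1}^*(\mu)(f)-L_{J_2}^*(\mu)(f)=\mu\big(L_{J_1}(f)\big)-\mu\big(L_{J_2}(f)\big)=\mu\big(L_{J_1}(f)-L_{J_2}(f)\big).
\]
The core of the argument is then the pointwise estimate
\[
\big|L_{J_1}(f)(x)-L_{J_2}(f)(x)\big|=\Big|\sum_{a=1}^{d}[J_1(ax)-J_2(ax)]\,f(ax)\Big|\le\sum_{a=1}^{d}|J_1(ax)-J_2(ax)|\le d\,||J_1-J_2||_\infty,
\]
where the first inequality uses $0\le f(ax)\le 1$ and the second bounds each of the $d$ summands by the supremum norm of $J_1-J_2$. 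Since $\mu$ is a probability, integrating this uniform bound yields $\mu\big(L_{J_1}(f)-L_{J_2}(f)\big)\le d\,||J_1-J_2||_\infty$, and taking the supremum over all admissible $f$ gives $W_1(L_{J_1}^*(\mu),L_{J_2}^*(\mu))\le d\,||J_1-J_2||_\infty$, as claimed.

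I do not anticipate a genuine obstacle here; once the correct normalization of test functions is in place the computation is elementary. The only point worth flagging is that the factor $d$ arises for free from the crude bound $|f|\le 1$ applied on each of the $d$ preimage branches, so no cancellation and no finer Lipschitz information about $f$ is needed. This is in contrast with Proposition~\ref{eewx1}, where the contraction constant $(d+1)\gamma$ genuinely depended on estimating the Lipschitz seminorm $|L_J(f)|_\gamma$; in the present statement the variation is in the parameter $J$ rather than in the argument $\mu$, so only the sup-norm of $f$ and the normalization $\sum_a J(ax)=1$ play a role.
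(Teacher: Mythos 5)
Your proof is correct and follows essentially the same route as the paper's: both reduce $W_1$ to test functions $f$ with $|f|_\gamma\le 1$ and $0\le f\le 1$, pass to the transfer operators via duality, and bound $|L_{J_1}(f)(x)-L_{J_2}(f)(x)|$ pointwise by $d\,\|J_1-J_2\|_\infty$ using only $|f|\le 1$ before integrating against $\mu$. The only cosmetic difference is that you make the normalization of test functions explicit, which the paper leaves implicit in its choice of $f:\Omega\to[0,1]$.
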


\begin{proof} Consider a function $f: \Omega \to [0,1]$ satisfying $|f|_{\gamma}\leq 1$.
We have
	$$| {L}_{J_1} (f) (x) - {L}_{J_2} (f) (x)|\leq \sum_{a=1}^d |f(ax) J_1(ax) - f(ax) J_2 (ax) |$$
	$$ \, \leq \sum_{a=1}^d  |J_1 (ax) - J_2(ax) | \leq  d\, ||J_1- J_2||_\infty.$$
Then,
	$$| \int f  d\, {L}_{J_1}^*(\mu) - \int f  d\, {L}_{J_2 }^*(\mu) |=
	|\int {L}_{J_1}  (f) (x) d \mu(x) -\int {L}_{J_2}  (f) (x) d \mu(x)   |\leq$$
	$$\int |\,{L}_{ J_1}  (f) (x) - {L}_{J_2}  (f) (x)\,| d \mu(x)    \leq  d\, ||J_{1}- J_{2}||_\infty. $$
	
\end{proof}

\begin{proposition} \label{eewx31}  Let $r=(d+1)\gamma <1$ and consider on $\mathcal{D}$ the metric defined by $\tilde d(J_1,J_2) = \frac{d}{r}||J_{1}- J_{2}||_\infty$. Then, for any $ J_1,J_2  \in  \mathcal{D}$ and any $\mu_1,\mu_2\in \mathcal{P} (\Omega)$ we have
	\begin{equation} \label{ewq12392} W_{1}({L}_{J_1}^* (\mu_1), {L}_{J_2}^* (\mu_2))\leq  r[ W_{1}( \mu_1,   \mu_2) + \tilde d(J_{1}, J_{2})]
	\end{equation}
\end{proposition}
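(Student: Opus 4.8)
The plan is to decompose the left-hand side by the triangle inequality for the metric $W_1$, inserting the intermediate probability $L_{J_1}^*(\mu_2)$, which shares the Jacobian of the first term and the base measure of the second. That is, I would write
\[
W_{1}(L_{J_1}^* (\mu_1), L_{J_2}^* (\mu_2))\leq W_{1}(L_{J_1}^* (\mu_1), L_{J_1}^* (\mu_2)) + W_{1}(L_{J_1}^* (\mu_2), L_{J_2}^* (\mu_2)),
\]
so that the first summand involves a single fixed Jacobian $J_1$ applied to two different measures, while the second involves two different Jacobians applied to the single measure $\mu_2$. This reduces the whole estimate to the two preparatory results already established.

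For the first summand, Proposition \ref{eewx1} gives the uniform contraction $W_{1}(L_{J_1}^* (\mu_1), L_{J_1}^* (\mu_2)) \leq r\, W_{1}(\mu_1, \mu_2)$, with $r=(d+1)\gamma$ independent of $J_1$. For the second summand, Proposition \ref{eewx2} yields $W_{1}(L_{J_1}^* (\mu_2), L_{J_2}^* (\mu_2)) \leq d\, \|J_1 - J_2\|_\infty$. Adding these two bounds produces
\[
W_{1}(L_{J_1}^* (\mu_1), L_{J_2}^* (\mu_2))\leq r\, W_{1}(\mu_1, \mu_2) + d\, \|J_1 - J_2\|_\infty.
\]

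The last step is purely algebraic: by the definition $\tilde d(J_1, J_2) = \frac{d}{r}\|J_1 - J_2\|_\infty$, the Jacobian term equals $d\,\|J_1-J_2\|_\infty = r\,\tilde d(J_1,J_2)$, so factoring out $r$ gives exactly $r[W_{1}(\mu_1,\mu_2) + \tilde d(J_1,J_2)]$, as claimed. There is no genuine obstacle here; the only point requiring care is the choice of the rescaled metric $\tilde d$, which is engineered precisely so that the heterogeneous bound (a factor $r$ on the measure part, a factor $d$ on the Jacobian part) collapses into a single common contraction factor $r$. This uniform joint contraction on the product $\mathcal{P}(\Omega) \times \mathcal{D}$ is what will later enable a Hutchinson-type fixed point argument in the proof of Theorem \ref{thm: IFS invariant variation on shift}.
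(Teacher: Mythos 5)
Your proposal is correct and follows essentially the same route as the paper's proof: the triangle inequality through the intermediate measure $L_{J_1}^*(\mu_2)$, followed by Proposition \ref{eewx1} for the fixed-Jacobian term, Proposition \ref{eewx2} for the fixed-measure term, and the identity $d\,\|J_1-J_2\|_\infty = r\,\tilde d(J_1,J_2)$ to factor out $r$. Nothing is missing.
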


\begin{proof} 	
	Note that from Propositions  \ref{eewx1} and \ref{eewx2}
	$$W_{1}({L}_{J_1}^* (\mu_1), {L}_{J_2}^* (\mu_2))\leq W_{1}({L}_{J_1}^* ( \mu_1), L_{J_1}^* (\mu_2)) + W_{1}({L}_{J_1}^* ( \mu_2), {L}_{J_2}^* (\mu_2))\leq $$
	$$ r\, \, W_{1}( \mu_1,   \mu_2)  \, +\, d\, ||J_{1}- J_{2}||_\infty = r[ W_{1}( \mu_1,   \mu_2) +\tilde d(J_{1}, J_{2})]. $$
\end{proof}

We remark that the metric $\tilde d$ on $\mathcal{D}$ is equivalent to the supremum norm.

\medskip

\begin{proposition} Given a closed (therefore compact) subset $D\subseteq \mathcal{D}$ with respect to the metric $\tilde d$ and the compact metric space $(\mathcal{P}(X),W_{1}),$ consider the iterated function system $(\mathcal{P}(X),(\phi_J)_{J\in D})$ where
$\phi_J: \mathcal{P}(X)\to \mathcal{P}(X)$ satisfies $\phi_J(\mu) = {L}_{J}^*  (\mu).$
Such IFS is uniformly contractible, that is, for any $J_1,J_2 \in D$ and $\mu_1,\mu_2 \in \mathcal{P}(X)$,
\[W_{1}(\phi_{J_1}(\mu_1), \phi_{J_2}(\mu_2))\leq  r[ W_{1}( \mu_1,   \mu_2) + \tilde d(J_{1}, J_{2})].\]
\end{proposition}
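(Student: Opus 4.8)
The plan is to observe that this proposition is essentially a reformulation of Proposition~\ref{eewx31}, now phrased in the language of an iterated function system, so almost all of the work is already done. First I would unwind the definition of the maps $\phi_J$: by hypothesis $\phi_J(\mu) = L_J^*(\mu)$ for every $J \in D$ and every $\mu \in \mathcal{P}(X)$. Thus the quantity to be bounded, $W_1(\phi_{J_1}(\mu_1), \phi_{J_2}(\mu_2))$, is literally $W_1(L_{J_1}^*(\mu_1), L_{J_2}^*(\mu_2))$.

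Next I would invoke Proposition~\ref{eewx31} directly. Since $D \subseteq \mathcal{D}$, any pair $J_1, J_2 \in D$ is in particular a pair in $\mathcal{D}$, and the metric $\tilde d$ on $D$ is simply the restriction of the metric $\tilde d$ on $\mathcal{D}$; hence the inequality
\[W_1(L_{J_1}^*(\mu_1), L_{J_2}^*(\mu_2)) \leq r\,[\,W_1(\mu_1,\mu_2) + \tilde d(J_1,J_2)\,]\]
holds verbatim, with $r=(d+1)\gamma<1$. Substituting back the definition of $\phi_J$ yields exactly the claimed uniform contractibility bound, which completes the argument.

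For completeness I would also record the standing hypotheses that make the statement meaningful: because $D$ is a closed subset of the compact metric space $(\mathcal{D},\tilde d)$ it is itself compact, and each $\phi_J = L_J^*$ genuinely maps $\mathcal{P}(X)$ into $\mathcal{P}(X)$ (as $L_J^*$ is the dual of a transfer operator preserving total mass), so $(\mathcal{P}(X),(\phi_J)_{J\in D})$ is a bona fide IFS. None of this enters the estimate itself.

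The honest assessment is that there is no real obstacle remaining: the genuine content was already extracted in Propositions~\ref{eewx1} and~\ref{eewx2}, where one separately controls the dependence on the measure (the uniform contraction with ratio $r$, independent of $J$) and the dependence on the Jacobian (Lipschitz with respect to $||\cdot||_\infty$), and then in Proposition~\ref{eewx31}, where a single application of the triangle inequality for $W_1$ combines them. The present proposition merely repackages that combined estimate as the defining uniform-contractibility property of the IFS, so the proof reduces to matching notation.
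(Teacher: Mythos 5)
Your proposal is correct and matches the paper exactly: the paper offers no separate proof for this proposition, treating it precisely as you do --- an immediate restatement of Proposition~\ref{eewx31} (itself obtained by combining Propositions~\ref{eewx1} and~\ref{eewx2} via the triangle inequality for $W_1$) in IFS language, valid on $D$ since $D\subseteq\mathcal{D}$ and $\tilde d$ restricts. Your added remarks on the compactness of $D$ and on $L_J^*$ preserving $\mathcal{P}(X)$ are sound housekeeping and do not change the argument.
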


Applying above proposition, the proof of Theorem \ref{thm: IFS invariant variation on shift} is a consequence of Lemma 4.1 and Theorem 4.7 in \cite{MO1}.

\smallskip

\begin{remark} Once proved that $(\mathcal{P}(X),(\phi_J)_{J\in D})$ is uniformly contractible, Theorem 3.6 in \cite{MO1} can be also applied in order to get a characterization of invariant idempotent pressures for the non-place dependent case (where $q_J(\mu)$ depends of $\mu$).  Furhtermore, all theory concerning transfer operators for uniformly contractible IFS (see \cite{MO3}) can be applied for such IFS.
\end{remark}

\section{The inverse problem}\label{sec:The inverse problem}

We start investigating the inverse problem, that is: given an idempotent pressure function $\ell$, is there some uniformly contractive IFS in $I(\mathcal{P}(X))$ (recall from Section~\ref{sec: proof1}, Definition~\ref{def: Maslov measure}, that $I(\mathcal{P}(X))$ is the set of idempotent probabilities over $\mathcal{P}(X)$) for which $\ell$ is invariant?

An attempt to solve our inverse problem is to consider an IFS as general as possible. Let   $(X,d_X)$ be a compact metric space and  $(J := \mathcal{P}(X),d_J=\frac{1}{\gamma} d_{\mathcal{P}(X)})$, for $0<\gamma<1$, also a compact metric space. 	Consider the iterated function system given by a family of maps $\{\phi_{\nu}: \mathcal{P}(X)\to \mathcal{P}(X)\,|\,\nu \in \mathcal{P}(X)\}$ which are defined by
$$\phi_{\nu}(\mu)=\nu, \; \forall \nu \in \mathcal{P}(X) , \mu \in \mathcal{P}(X).$$
We notice that, $\phi$ is uniformly contractive, that is,
$$d_{\mathcal{P}(X)}(\phi_{\nu_1}(\mu_1),\phi_{\nu_2}(\mu_2))= d_{\mathcal{P}(X)}(\nu_1,\nu_2) + 0 \leq \gamma \cdot[d_J(\nu_1,\nu_2)+ d_{\mathcal{P}(X)}(\mu_1,\mu_2)].$$
 A continuous  function $q:\mathcal{P}(X) \times \mathcal{P}(X) \to \mathbb{R},\,q(\nu,\mu)=q_\nu(\mu),$ is a normalized family of weights if it satisfies $|q_\nu(\mu_1)-q_\nu(\mu_2)|<Cd(\mu_1,\mu_2)\,\forall \nu,\mu_1,\mu_2 \in \mathcal{P}(X)$ and $\oplus_{\nu\in \mathcal{P}(X) }q_\nu(\mu) = 0\,\forall \, \mu\in \mathcal{P}(X)$.  We call $\mathcal{S}=(\mathcal{P}(X), \phi, q)$ a max-plus IFS (mpIFS) (see \cite{MO1}).
\begin{definition}\label{def:mp ruelle operator}
	To each mpIFS $\mathcal{S}=(\mathcal{P}(X), \phi, q)$ we assign the following operators:\newline
	1.  $\mathcal{L}_{\phi,q}: C(\mathcal{P}(X), \mathbb{R})   \to C(\mathcal{P}(X), \mathbb{R})  $, defined by
	\begin{equation}\label{eq:mp ruelle operator}
		\mathcal{L}_{\phi,q}(f)(\mu):=\bigoplus_{\nu \in \mathcal{P}(X)} q_{\nu}(\mu)\odot f(\phi_{\nu}(\mu)).
	\end{equation}
	2.  $\mathcal{M}_{\phi,q}: I(\mathcal{P}(X)) \to I(\mathcal{P}(X)),$ defined by
	\begin{equation}\label{eq:Markov operator}
		\mathcal{M}_{\phi,q}(\ell):=\bigoplus_{\nu \in \mathcal{P}(X)}\ell( q_{\nu}\odot (f\circ \phi_{\nu})).
	\end{equation}
	3.  $L_{\phi,q}: U(\mathcal{P}(X)) \to U(\mathcal{P}(X)),$ defined by
	\begin{equation}\label{eq:transfer operator}
		L_{\phi,q}(\lambda)(\mu):=\bigoplus_{(\nu,\eta)\in \phi^{-1}(\mu)} q_{\nu}(\eta) \odot  \lambda(\eta).
	\end{equation}
\end{definition}
Next theorem establishes the relation between the three operators in Definition \ref{def:mp ruelle operator}.
\begin{theorem} \cite{MO1} \label{thm: equivalences} Given a density function $\lambda \in  U(\mathcal{P}(X))$ satisfying $\oplus_\mu \lambda(\mu) = 0$ and the associated idempotent pressure   $\ell=\bigoplus_{\mu\in \mathcal{P}(X)}\lambda(\mu)\odot\delta_\mu\in I(\mathcal{P}(X))$ we have that $\mathcal{M}_{\phi,q}(\ell)= \bigoplus_{\mu \in \mathcal{P}(X)} L_{\phi,q} (\lambda)(\mu) \odot \delta_\mu$, that is, $\mathcal{M}_{\phi,q}(\mu)$ has density $L_{\phi,q} (\lambda)$ where $\lambda$ is the density of $\ell$. Furthermore
	$$\mathcal{M}_{\phi,q}(\ell)(f) = \ell(\mathcal{L}_{\phi,q}(f)),$$
	for any $f \in C(X, \mathbb{R})$, that is, $\mathcal{M}_{\phi,q}$ is the max-plus dual of $\mathcal{L}_{\phi,q}$.
\end{theorem}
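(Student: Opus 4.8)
The plan is to reduce both assertions to the Maslov representation of Theorem~\ref{thm:represent idepotent}, rewriting every idempotent pressure through its density and then performing a purely order-theoretic interchange of suprema in the max-plus algebra. First I would unwind the hypothesis $\ell=\bigoplus_{\mu}\lambda(\mu)\odot\delta_\mu$: since $\delta_\mu(g)=g(\mu)$ and $(\lambda(\mu)\odot\delta_\mu)(g)=\lambda(\mu)+g(\mu)$, this is exactly the statement that $\ell(g)=\sup_{\mu}[\lambda(\mu)+g(\mu)]$ with density $\lambda$. Everything then follows by feeding concrete test functions into this formula.

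For the duality identity I would expand the left-hand side using Definition~\ref{def:mp ruelle operator}(2); applying $\ell$ to the function $\mu\mapsto q_\nu(\mu)+f(\phi_\nu(\mu))$ gives
\[
\mathcal{M}_{\phi,q}(\ell)(f)=\bigoplus_{\nu}\sup_{\mu}\big[\lambda(\mu)+q_\nu(\mu)+f(\phi_\nu(\mu))\big]=\sup_{\nu}\sup_{\mu}\big[\lambda(\mu)+q_\nu(\mu)+f(\phi_\nu(\mu))\big].
\]
On the other side, $\ell(\mathcal{L}_{\phi,q}(f))=\sup_{\mu}\big[\lambda(\mu)+\sup_{\nu}(q_\nu(\mu)+f(\phi_\nu(\mu)))\big]$, and since the summand $\lambda(\mu)$ is independent of the inner index it factors out of the inner supremum, producing the same iterated supremum. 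Thus the two sides agree; the only point is that a double supremum may be freely reordered, which for suprema is immediate and needs no convergence or integrability hypothesis.

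For the density identity I would take the double supremum just obtained and regroup it according to the output probability $\rho=\phi_\nu(\mu)$:
\[
\mathcal{M}_{\phi,q}(\ell)(f)=\sup_{\rho}\Big[\sup_{(\nu,\mu):\,\phi_\nu(\mu)=\rho}\big(\lambda(\mu)+q_\nu(\mu)\big)+f(\rho)\Big].
\]
By Definition~\ref{def:mp ruelle operator}(3) the inner supremum is precisely $L_{\phi,q}(\lambda)(\rho)$, so $\mathcal{M}_{\phi,q}(\ell)(f)=\sup_{\rho}[L_{\phi,q}(\lambda)(\rho)+f(\rho)]$, and the claim $\mathcal{M}_{\phi,q}(\ell)=\bigoplus_{\mu}L_{\phi,q}(\lambda)(\mu)\odot\delta_\mu$ then follows from the uniqueness of the density in Theorem~\ref{thm:represent idepotent}. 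As a byproduct this also confirms that $\mathcal{M}_{\phi,q}$ preserves normalization, since $\sup_{\rho}L_{\phi,q}(\lambda)(\rho)=\sup_{\eta}[\sup_{\nu}q_\nu(\eta)+\lambda(\eta)]=\sup_{\eta}\lambda(\eta)=0$ using $\oplus_\nu q_\nu(\eta)=0$ and $\oplus_\mu\lambda(\mu)=0$, so $\mathcal{M}_{\phi,q}(\ell)\in I(\mathcal{P}(X))$.

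The main obstacle is not the algebra but the verification that $L_{\phi,q}(\lambda)$ genuinely lies in $U(\mathcal{P}(X))$, which is exactly what licenses the uniqueness clause of Theorem~\ref{thm:represent idepotent}. Here I would use that $\mathcal{P}(X)$ is compact, that $\phi$ is continuous so each fiber $\phi^{-1}(\rho)$ is compact, and that $q$ is continuous, to argue that the fiberwise supremum of the u.s.c. map $(\nu,\mu)\mapsto\lambda(\mu)+q_\nu(\mu)$ is again u.s.c.; this is the stability phenomenon for uniformly contractible IFS established in \cite{MO1}, on which the statement rests. That $L_{\phi,q}(\lambda)$ is not identically $-\infty$ is clear, since if $\lambda(\eta_0)>-\infty$ then setting $\rho_0=\phi_\nu(\eta_0)$ for any $\nu$ gives $L_{\phi,q}(\lambda)(\rho_0)\geq q_\nu(\eta_0)+\lambda(\eta_0)>-\infty$, so its support is nonempty.
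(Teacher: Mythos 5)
The paper does not prove this statement at all: Theorem~\ref{thm: equivalences} is quoted verbatim from \cite{MO1} as an imported result, so there is no internal proof to compare against. Your argument is a correct, self-contained verification. Unwinding $\ell$ through its Maslov representation, the duality identity is exactly the interchange of the two suprema (legitimate with no side conditions, as you note), and the density identity is the regrouping of the double supremum along the fibers of $(\nu,\mu)\mapsto\phi_\nu(\mu)$, with $\rho$ outside the image of $\phi$ contributing $-\infty$ and hence harmless; the identification with $L_{\phi,q}(\lambda)$ is then immediate from Definition~\ref{def:mp ruelle operator}(3), and uniqueness in Theorem~\ref{thm:represent idepotent} upgrades this to the statement about \emph{the} density. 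You also correctly isolate the only two points that genuinely need care: (i) that $L_{\phi,q}(\lambda)$ lies in $U(\mathcal{P}(X))$, which follows from the standard fact that a fiberwise supremum of an u.s.c. function over the compact fibers of a jointly continuous map is u.s.c. (joint continuity of $(\nu,\mu)\mapsto\phi_\nu(\mu)$ holds here because the IFS is uniformly contractive), plus your nonemptiness-of-support observation; and (ii) the normalization $\sup_\rho L_{\phi,q}(\lambda)(\rho)=0$, obtained by regrouping the same double supremum by $\eta$ instead of by $\rho$ and using $\oplus_\nu q_\nu(\eta)=0$, which justifies that $\mathcal{M}_{\phi,q}$ indeed maps $I(\mathcal{P}(X))$ to itself. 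One minor implicit assumption you inherit from the paper's Definition~\ref{def:mp ruelle operator} is that $\mathcal{L}_{\phi,q}(f)$ is continuous (so that $\ell$ may be applied to it); this is where the Lipschitz hypothesis on $q$ and the contractivity of $\phi$ enter, and a complete write-up should say so explicitly.
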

\begin{definition}\label{def:Markov operator}
	An idempotent pressure  $\ell \in I(\mathcal{P}(X))$ with density $\lambda \in  U(\mathcal{P}(X))$ is called invariant (with respect to the mpIFS) if it satisfies any of the following equivalent conditions:\newline
	1. $\mathcal{M}_{\phi,q}(\ell)=\ell$;\newline
	2. $L_{\phi,q}(\lambda)=\lambda$;\newline
	3. $\ell(\mathcal{L}_{\phi,q}(f))=\ell(f)$,  for any $f \in C(\mathcal{P}(X), \mathbb{R})$.
\end{definition}
{\bf Question:} Consider an idempotent pressure function $\ell$ satisfying $\ell(0)=0$ and with density entropy $h:=h_{\ell}$. Is  there a family of weights $q_{\nu}(\mu) \leq 0$, such that $\max_{\nu \in X} q_{\nu}(\mu) =0$, and $\ell= \max_{\mu \in \mathcal{P}(X)} h(\mu) + \delta_\mu$ is invariant for the max-plus IFS $(\mathcal{P}(X), \phi_\nu, q_{\nu})$?\\
The density entropy, as an invariant density, must verify $L_{\phi,q}(h)=h$, that is,
$$h(\mu)= \max_{(\nu,\eta)\in \phi^{-1}(\mu)} q_{\nu}(\eta) \odot  h(\eta).$$
Since $\phi_\nu(\mu)=\nu, \; \forall \nu \in \mathcal{P}(X) , \mu \in \mathcal{P}(X)$, we obtain $(\nu,\eta)\in \phi^{-1}(\mu)$ if, and only if $\nu=\mu$ and $\eta$ is arbitrary, thus,
\begin{equation}\label{eq: obstruction inverse problem}
  h(\mu)= \max_{\eta \in \mathcal{P}(X)} q_{\mu}(\eta) +  h(\eta).
\end{equation}
Note that $q_{\nu}(\zeta) = h(\nu) - h(\zeta)$ is not necessarily a candidate, because $h(\nu) - h(\zeta)$ is not necessarily bounded from above, neither continuous. Actually, for each $\nu$, we have that $q_{\nu}(\zeta)$ is a l.s.c. function.\\
Let us suppose that the entropy $h$ is continuous, non positive and that there exists $\mu_0 \in \mathcal{P}(X)$ such that $h(\mu_0)=0$. In this case, the solution is
$$q_{\mu}(\eta):=h(\mu), \; \forall \eta,\mu \in \mathcal{P}(X).$$
We can check that this function solves Equation~\ref{eq: obstruction inverse problem}.\\
Indeed,
$$h(\mu)= \max_{\eta \in \mathcal{P}(X)} q_{\mu}(\eta) +  h(\eta)= \max_{\eta \in \mathcal{P}(X)} h(\mu) +  h(\eta)$$
is always true because $h\leq 0$ and for $\eta=\mu_0$ we have $h(\eta)=0$.

\section{Max-plus dynamics}\label{sec:Max-plus dynamics}

Suppose $K$ is a compact  metric space, and  $T:K \to K$ is continuous.  Consider an ergodic $T$-invariant probability $\mu$ on $K$ and a continuous function $f:K \to \mathbb{R}$. It is natural to consider  the asymptotic, as $n \to \infty$,  of the sums
$$( f(x) \oplus f (T(x)) \oplus...\oplus f (T^{n-1} (x)),$$
for $\mu$-almost every $x\in K$. Among other things, we will introduce the max-plus partition function and we will describe large deviation properties.

The results of this section can be applied to the case where $T$ is the pushforward map and $K=\mathcal{P}(X)$, where $X$ is a compact metric space. A version of this result for uniquely ergodic  dynamical systems can be found in Corollary 4 in \cite{KM89}, which is the max-plus analogous to Furstenberg's theorem \cite{FURSTENBERG61}.

\smallskip

\begin{lemma}  \label{ler1} Suppose $K$ is a compact  metric space, and $T:K \to K$ is  continuous. Consider an ergodic $T$-invariant probability $\mu$ which is positive on open sets and a continuous function $f:K \to \mathbb{R}$. Then, for $\mu$-almost every $x\in K$ we have
\begin{equation} \label{out} \lim_{n \to \infty} f(x) \oplus (T(x)) \oplus...\oplus f (T^{n-1} (x)) = \sup_{x \in K} f(x).
\end{equation}
\end{lemma}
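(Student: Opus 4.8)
The plan is to reduce the statement to an orbit-visiting argument driven by ergodicity and full support. Write $M := \sup_{x\in K} f(x)$, which is attained because $K$ is compact and $f$ is continuous. Since $a \oplus b = \max(a,b)$, the expression on the left of \eqref{out} equals $\max_{0\le j \le n-1} f(T^j(x))$, a nondecreasing sequence in $n$; hence the limit exists and equals $\sup_{j \ge 0} f(T^j(x))$. The bound $\sup_{j\ge 0} f(T^j(x)) \le M$ is immediate, so the entire content of the lemma is the reverse inequality, valid for $\mu$-a.e.\ $x$.

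First I would fix $\epsilon > 0$ and consider the set $V_\epsilon := \{y \in K : f(y) > M - \epsilon\}$. By continuity of $f$ this set is open, and it is nonempty since the maximum $M$ is attained; by hypothesis $\mu$ is positive on open sets, so $\mu(V_\epsilon) > 0$. I would then apply the Birkhoff ergodic theorem to the indicator $\mathbf{1}_{V_\epsilon}$: since $\mu$ is ergodic, for $\mu$-a.e.\ $x$
\[
\lim_{n\to\infty} \frac{1}{n} \sum_{j=0}^{n-1} \mathbf{1}_{V_\epsilon}(T^j(x)) = \mu(V_\epsilon) > 0.
\]
A strictly positive asymptotic frequency forces the orbit of $x$ to enter $V_\epsilon$ for infinitely many $j$, whence $\sup_{j\ge 0} f(T^j(x)) \ge M - \epsilon$ for every such $x$.

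The remaining issue is that the full-measure set produced above depends on $\epsilon$, and one cannot extract a single null set uniformly over an uncountable family. To handle this I would choose a sequence $\epsilon_k \downarrow 0$ (say $\epsilon_k = 1/k$), let $N_k$ be the $\mu$-null exceptional set associated to $\epsilon_k$, and set $N := \bigcup_k N_k$, which is still $\mu$-null. For $x \notin N$ one has $\sup_{j\ge 0} f(T^j(x)) \ge M - \epsilon_k$ for every $k$, hence $\sup_{j\ge 0} f(T^j(x)) \ge M$; combined with the trivial upper bound this yields equality, and therefore $\lim_n \max_{0\le j\le n-1} f(T^j(x)) = M = \sup_{x\in K} f(x)$ for $\mu$-a.e.\ $x$.

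I expect the delicate points to be bookkeeping rather than substance: the passage from a positive Birkhoff frequency to infinitely many visits, and the countable exhaustion in $\epsilon$. Ergodicity is used precisely to upgrade $\mu(V_\epsilon) > 0$ into almost-everywhere recurrence to $V_\epsilon$, while full support guarantees that the superlevel sets of $f$ just below its maximum are actually charged by $\mu$. One could equally replace the Birkhoff step by Poincar\'e recurrence on $V_\epsilon$ together with ergodicity of the invariant set $\limsup_j T^{-j}(V_\epsilon)$, which has positive, hence full, measure.
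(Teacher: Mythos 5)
Your proof is correct and follows essentially the same route as the paper's: Birkhoff's ergodic theorem applied to a positive-measure open set near the maximum (you use the superlevel set $\{f > M-\epsilon\}$, the paper a ball around a maximizer, which is immaterial), followed by a countable exhaustion $\epsilon_k = 1/k$ to obtain a single full-measure set. Your explicit remark that the sequence is nondecreasing, so that the limit exists and the $\limsup$ bound suffices, is a small point the paper leaves implicit, but otherwise the two arguments coincide.
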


\begin{proof}
Let $x_0 \in K$ be such that $f(x_0)=\sup_{x \in K} f(x)$. Given $k\in\mathbb{N}$, let $\delta>0$ be such that $|f(x)- f(x_0)|<1/k$ for any $x\in  B(x_0,\delta)$.

As $\mu(B(x_0,\delta))>0$, by Birkhoff ergodic theorem, for $\mu$ almost every point $x\in K$, the sequence $x, T(x), T^2(x),...$ will visit the set $B(x_0,\delta)$. Therefore, there exists a set $U_k\subseteq K$ such that $\mu(U_k) = 1$ and for any $x\in U_k$,
$$\limsup_{n \to \infty} f(x) \oplus f (T(x)) \oplus...\oplus f (T^{n-1} (x)) \geq f(x_0)- 1/k.$$
Taking $U=\cap_{k\in\mathbb{N}} U_k$ we get $\mu(U) = 1$ and
$$\limsup_{n \to \infty} f(x) \oplus f (T(x)) \oplus...\oplus f (T^{n-1} (x)) \geq f(x_0)\,\,\,\forall x\in U.$$

\end{proof}

\bigskip

Let $K$ be a compact set, $T:K\to K$ be continuous, $f: K \to \mathbb{R}$ be a continuous function and $\mu$ be an ergodic $T$-invariant probability which is positive on open sets. For each value $t\in \mathbb{R}$, we define
$ c(t):=\limsup_{n\to \infty} c_{n} (t)$, where
	\begin{equation} \label{feij501}c_n(t):= \frac{1}{n} \log \int e^{ n \,t\, (f(x) \oplus f(T(x)) \oplus f(T^{2} (x)) \oplus ...
\oplus f(T^{n-1} (x) )) } d \mu (x).
\end{equation}

\begin{proposition}
For $t\geq 0$ we have $c(t) = t\cdot \max_{x\in K}f(x)$ and $c(-t) \geq (-t)\cdot  \max_{x\in K}f(x)$. 	
\end{proposition}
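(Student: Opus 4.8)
The plan is to translate everything into ordinary maxima and then exploit the monotone convergence supplied by Lemma~\ref{ler1}. First I would write $M_n(x) := f(x)\oplus f(T(x))\oplus\cdots\oplus f(T^{n-1}(x)) = \max_{0\le j\le n-1} f(T^j(x))$ and set $M := \max_{x\in K} f(x)$, so that $c_n(t) = \frac1n\log\int e^{\,n t\,M_n(x)}\,d\mu(x)$. The two structural facts I rely on are: $M_n(x)\le M$ for every $x$ and $n$ (each term is at most the global maximum), and $M_n(x)$ is non-decreasing in $n$ with $M_n(x)\to M$ for $\mu$-almost every $x$, which is exactly the content of Lemma~\ref{ler1}.

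For $t\ge 0$ the upper bound is immediate: since $M_n(x)\le M$ and $t\ge0$, one has $e^{\,ntM_n(x)}\le e^{\,ntM}$, hence $\int e^{\,ntM_n}\,d\mu\le e^{\,ntM}$ and $c_n(t)\le tM$; passing to the $\limsup$ gives $c(t)\le tM$. For the matching lower bound I would fix $\varepsilon>0$ and set $A_N := \{x : M_N(x)\ge M-\varepsilon\}$; by the almost-everywhere convergence in Lemma~\ref{ler1} we have $\mu(A_N)\to 1$, so I may pick $N_0$ with $\mu(A_{N_0})>0$. Using monotonicity of $M_n$ in $n$, for every $n\ge N_0$ and every $x\in A_{N_0}$ we get $M_n(x)\ge M-\varepsilon$, whence
\[
\int e^{\,ntM_n(x)}\,d\mu \ge \int_{A_{N_0}} e^{\,nt(M-\varepsilon)}\,d\mu = \mu(A_{N_0})\,e^{\,nt(M-\varepsilon)}.
\]
Taking $\frac1n\log$ and letting $n\to\infty$ kills the $\frac1n\log\mu(A_{N_0})$ term and yields $c(t)\ge t(M-\varepsilon)$; since $\varepsilon>0$ is arbitrary, $c(t)\ge tM$, and combined with the upper bound this proves $c(t)=tM$.

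For the second assertion I would note that the case of a negative exponent is in fact the easy direction. With $t\ge0$ the multiplier $-t$ is $\le0$, so $M_n(x)\le M$ now gives $-tM_n(x)\ge -tM$ and therefore $e^{-ntM_n(x)}\ge e^{-ntM}$ pointwise. Integrating, $\int e^{-ntM_n}\,d\mu\ge e^{-ntM}$, so $c_n(-t)\ge -tM$ for every $n$, and passing to the $\limsup$ gives $c(-t)\ge (-t)M$ as claimed; no convergence statement is needed here, only the uniform bound $M_n\le M$.

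The only genuinely delicate step is the lower bound for $t\ge0$, and all of its force is concentrated in Lemma~\ref{ler1}: it is the ergodicity of $\mu$ together with its positivity on open sets that guarantees the orbit of almost every point enters a neighbourhood of a maximizer of $f$, making $\mu(A_N)$ eventually positive. The reason the reverse inequality for $c(-t)$ does not come out as an equality is precisely that the $\limsup$ cannot be pushed below $-tM$ without controlling how fast $M_n(x)$ approaches $M$; a matching upper bound would require estimating the measure of the set where $M_n$ is still far from $M$, which the present hypotheses do not provide.
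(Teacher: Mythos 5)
Your proof is correct, and the second half (the bound $c(-t)\ge -t\max f$ via the pointwise inequality $M_n\le M$) is exactly the paper's argument. Where you diverge is in the lower bound $c(t)\ge t\max_{x\in K}f(x)$ for $t\ge 0$: you route it through Lemma~\ref{ler1}, using the $\mu$-a.e.\ convergence of the running maxima $M_n$ together with their monotonicity in $n$ to manufacture a set $A_{N_0}$ of positive measure on which $M_n\ge M-\varepsilon$ for all large $n$. The paper instead argues directly: it picks a maximizer $x_0$, a ball $B(x_0,\delta)$ on which $f>f(x_0)-\epsilon$, and uses only the crude pointwise bound $M_n(x)\ge f(x)$ (the \emph{first} term of the max) to get $\int e^{ntM_n}\,d\mu \ge e^{nt(f(x_0)-\epsilon)}\mu(B(x_0,\delta))$, with positivity of $\mu$ on open sets killing the measure factor after taking $\tfrac1n\log$. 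The practical difference: the paper's argument uses no dynamics and no ergodicity at all, so it shows the identity $c(t)=t\max f$ holds for any Borel probability positive on open sets; your argument imports Birkhoff's theorem through Lemma~\ref{ler1}, so it is a strictly heavier tool under the same hypotheses, though it has the expository virtue of concentrating all the dynamical content in one quotable lemma and of making transparent why only positivity of $\mu(A_{N_0})$, not its size, matters. Both are complete proofs; yours is simply less economical in its hypotheses than the one in the paper.
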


\begin{proof}
	Clearly, $c(0)=0$. Suppose $t>0$ and let $x_0\in K$ be such that $f(x_0)=\max_{x\in K}f(x)$. For each $\epsilon>0$ let $\delta>0$ be such that $f(x) >f(x_0)-\epsilon$ for any $x\in B(x_0,\delta)$. Then we have
	\begin{align*}
		c(t) &\geq \limsup_{n\to+\infty} \frac{1}{n}\log\int_{B(x_0,\delta)}e^{ntf(x)}d\mu(x)\\
		&\geq \limsup_{n\to+\infty} \frac{1}{n}\log\left(e^{nt(f(x_0)-\epsilon)}\cdot \mu(B(x_0,\delta))\right)\\
		&=t(f(x_0)-\epsilon) = t(\max_{x\in K}f(x)) - t\epsilon.
	\end{align*}
As $\epsilon$ is arbitrary, we get $c(t) \geq t\,\max_{x\in K}f(x)$. The opposite inequality is trivial. Furthermore,
 \begin{align*}
 	c(-t) &= \limsup_{n\to+\infty} \frac{1}{n}\log\int e^{- n \,t\, (f(x) \oplus  ...
 		\oplus f(T^{n-1} (x) ) } d \mu (x)\\
 	&\geq \limsup_{n\to+\infty} \frac{1}{n}\log\int e^{ -n \,t\, f(x_0) } d \mu =-tf(x_0)= -t(\max_{x\in K}f(x)).
 \end{align*}
\end{proof}
\medskip

\begin{remark} In Example \ref{ex: LDP} we exhibit a case where $c(-t) \neq (-t)\cdot  \max_{x\in K}f(x)$.
\end{remark}

\begin{remark} If $f\geq 0$ then $c$ is non-decreasing and consequently $c(s\oplus t)=c(s)\oplus c(t) $. If $f\geq 1$ and $\alpha <0$ then $c(\alpha \odot t) \leq \alpha \odot c(t)$. Indeed,
\[c(\alpha+t) \leq  \limsup_{n\to \infty} \frac{1}{n} \log \int e^{ (n\,\alpha \cdot 1 ) + n \,t\, (f(x) \oplus ...
		\oplus f(T^{n-1} (x) ) } d \mu (x) = \alpha +c(t).\]
Consequently, if $f\geq 1$ then $c$ is max-plus convex, that is, if $\alpha \oplus \beta = 0$ then $$c((\alpha \odot t)\oplus (\beta \odot s)) \leq (\alpha \odot c(t))	\oplus (\beta \odot c(s)).$$
	
	\end{remark}

In what follows we would like to estimate the growth with $n$ of the expression
\begin{equation} \label{try}   \int e^{n \,t\, (f(z) \oplus f(\sigma(z)) \oplus f(\sigma^{2} (z)) \oplus ...
\oplus f(\sigma^{n-1} (z) ) } d \mu (z),
\end{equation}
which is the max-plus version of the so called partition function.

\medskip

\begin{lemma}[ Chebyshev's inequality]
Let $g:K\to\mathbb{R}$ be a measurable function and $h:\mathbb{R} \to \mathbb{R}$ be a non-negative, non-decreasing function such that  $\int h(g(x)) d \mu (x)<+\infty$. Then, for any value $d$ such that $h(d)>0$,  $$ \mu (\{ x \mid g(x) \geq d \}) \leq \frac{ \int h(g(x)) d \mu (x)  }{ h(d)}.$$
\end{lemma}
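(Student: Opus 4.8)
The plan is to prove this by the classical "indicator-function bound" argument adapted to the composition $h \circ g$. The key observation is that the set $\{x \mid g(x) \geq d\}$ can be controlled by comparing $h(g(x))$ against the constant $h(d)$ on that set, using the monotonicity of $h$.

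First I would fix a value $d$ with $h(d) > 0$ and introduce the set $E_d := \{x \in K \mid g(x) \geq d\}$. The crucial step is the pointwise inequality
\begin{equation} \label{cheb-pointwise}
h(d)\cdot \mathbf{1}_{E_d}(x) \leq h(g(x)), \quad \forall x \in K.
\end{equation}
To justify \eqref{cheb-pointwise}, I would argue by cases: if $x \in E_d$ then $g(x) \geq d$, and since $h$ is non-decreasing we get $h(g(x)) \geq h(d) = h(d)\cdot \mathbf{1}_{E_d}(x)$; if $x \notin E_d$ then $\mathbf{1}_{E_d}(x) = 0$ and the left-hand side vanishes, while $h(g(x)) \geq 0$ by non-negativity of $h$. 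In either case the inequality holds.

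Next I would integrate \eqref{cheb-pointwise} with respect to $\mu$. By monotonicity of the integral and the fact that $\mu$ is a probability measure,
\[
h(d)\cdot \mu(E_d) = \int h(d)\cdot \mathbf{1}_{E_d}(x)\, d\mu(x) \leq \int h(g(x))\, d\mu(x) < +\infty.
\]
Since $h(d) > 0$ by hypothesis, I can divide both sides by $h(d)$ to obtain
\[
\mu(\{x \mid g(x) \geq d\}) = \mu(E_d) \leq \frac{\int h(g(x))\, d\mu(x)}{h(d)},
\]
which is exactly the claimed bound. There is no real obstacle here: the entire content is the pointwise inequality \eqref{cheb-pointwise}, which relies only on the monotonicity and non-negativity of $h$, and the measurability of $g$ (which guarantees $E_d$ is measurable and $h \circ g$ is integrable as assumed). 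The only point demanding minor care is the case analysis establishing \eqref{cheb-pointwise}, where non-negativity of $h$ is essential for the $x \notin E_d$ case — this is the one place where dropping a hypothesis would break the argument.
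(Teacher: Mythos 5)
Your proof is correct: the pointwise bound $h(d)\cdot \mathbf{1}_{E_d} \leq h\circ g$, justified by the case analysis using monotonicity (on $E_d$) and non-negativity (off $E_d$), followed by integration and division by $h(d)>0$, is exactly the standard argument, and you correctly note that measurability of $E_d$ and of $h\circ g$ follows from measurability of $g$ together with $h$ being monotone (hence Borel). The paper itself states this lemma without proof, treating it as a classical fact (in the spirit of the reference to Ellis), so your write-up simply supplies the standard argument the paper leaves implicit; there is no gap and nothing to reconcile.
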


In the classical case, the upper bound large deviation follows from Chebyshev's inequality (see \cite{Ellis}).
In the max-plus case we will also apply Chebyshev's inequality, but we  have to consider a small variation of it, taking into account our definition of  $c_n(t)$  (for the dynamical max-plus sum, in the exponential term in \eqref{try}, there is an extra term $n$ multiplying $t$, which does not appear in the case of the classical dynamical sum, as stated for $c(t)$ in page 535 in \cite{L3}.
\smallskip

\begin{proposition} \label{lape1} [Upper large deviation bounds] Let $K$ be a compact metric space, $T:K\to K$ be continuous, $f: K \to \mathbb{R}$ be a continuous function and $\mu$ be an ergodic $T$-invariant probability which is positive on open sets.  Then, for any value $b< \sup_{x\in K} f(x)$, we have
\begin{equation}\label{eq:ldp} \limsup_{n\to \infty} \frac{1}{n}\log (\mu\{x |  (f(x) \oplus ...
\oplus f(T^{n-1} (x) )\leq b \}) \leq \inf_{t\geq 0} [tb +c(-t)].\end{equation}

\end{proposition}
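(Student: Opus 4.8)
The plan is to reduce this lower-tail estimate to an exponential Chebyshev bound, exactly as in the classical theory, but with the exponent governed by $c_n(-t)$. Write $S_n(x) = f(x)\oplus f(T(x))\oplus\cdots\oplus f(T^{n-1}(x)) = \max_{0\le j\le n-1} f(T^j(x))$ for the max-plus Birkhoff sum. The event whose measure we must bound, $\{x : S_n(x)\le b\}$, is a \emph{lower} tail for $S_n$; the first step is to rewrite it as an \emph{upper} tail for $-S_n$, namely $\{x : -S_n(x)\ge -b\}$, so that the Chebyshev inequality stated above becomes applicable.

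Next I would apply Chebyshev's inequality with $g(x) = -S_n(x)$, threshold $d = -b$, and the non-negative, non-decreasing function $h(u) = e^{ntu}$, where $t\ge 0$ is arbitrary. The restriction $t\ge 0$ is precisely what makes $h$ non-decreasing, and $h(-b) = e^{-ntb}>0$, so the hypotheses of the lemma hold. This yields
\begin{equation*}
\mu\big(\{x : S_n(x)\le b\}\big) \;\le\; \frac{\int e^{-nt\,S_n(x)}\,d\mu(x)}{e^{-ntb}} \;=\; e^{ntb}\int e^{-nt\,S_n(x)}\,d\mu(x).
\end{equation*}
The key observation is that the remaining integral is exactly the object defining $c_n$ evaluated at $-t$: by \eqref{feij501} we have $\int e^{-nt\,S_n(x)}\,d\mu(x) = e^{n\,c_n(-t)}$. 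Substituting and taking $\tfrac1n\log$ gives the one-step estimate $\tfrac1n\log\mu(\{S_n\le b\}) \le tb + c_n(-t)$.

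Finally I would take $\limsup_{n\to\infty}$ of both sides. Since $tb$ is constant, the right-hand side has $\limsup$ equal to $tb + c(-t)$ by the definition $c(-t)=\limsup_n c_n(-t)$, whence
\begin{equation*}
\limsup_{n\to\infty}\frac1n\log\mu\big(\{x:S_n(x)\le b\}\big)\;\le\; tb + c(-t).
\end{equation*}
As $t\ge 0$ was arbitrary, taking the infimum over $t\ge 0$ on the right yields \eqref{eq:ldp}.

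I do not expect a serious obstacle: the only points requiring care are orienting Chebyshev correctly (the lower tail of $S_n$ is the upper tail of $-S_n$, which forces the sign $-t$ in the exponent and explains why $c(-t)$, rather than $c(t)$, appears) and checking that $h(u)=e^{ntu}$ meets the monotonicity and positivity hypotheses, which is exactly where the constraint $t\ge 0$ is used. The hypothesis $b<\sup_{x\in K}f(x)$ is not needed for the inequality itself; it only ensures the estimate is non-trivial (a genuine deviation), since for $b\ge\sup f$ the event has full measure and both sides reduce to statements about $0$. The extra factor $n$ multiplying $t$ in \eqref{feij501}, as noted before the proposition, is automatically absorbed because it appears identically in $h(d)=e^{-ntb}$ and in the integrand.
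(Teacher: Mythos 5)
Your proof is correct and follows essentially the same route as the paper's: Chebyshev's inequality applied to $g=-S_n$ with threshold $d=-b$ and $h(u)=e^{ntu}$ for $t\ge 0$, then $\tfrac1n\log$, $\limsup$, and the infimum over $t$. The only cosmetic difference is that the paper first normalizes to $f\ge 0$ by adding a constant, a reduction your argument shows is unnecessary (compactness already gives integrability of $e^{-ntS_n}$), and your closing remarks about the roles of $t\ge 0$ and of the hypothesis $b<\sup f$ are accurate.
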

\begin{proof}
We can suppose $f\geq 0$. Indeed, if we add a constant on both $f$ and $b$ then both sides of \eqref{eq:ldp} remains equal.
For each $n\in\mathbb{N}$ and $t>0$, let us consider
 $$ h(x) = e^{n\, t\, x} , \, \, g(x) =-  (f(x) \oplus ...
\oplus f(T^{n-1} (x) ) \,\text{ and }  \, d=-b .$$
From Chebychev inequality, for $t\geq 0$
\begin{equation*}  \mu\{x |  - (f(x) \oplus  ...
	\oplus f(T^{n-1} (x) ) \geq -b\} \leq \frac{\int e^{-nt(f(x)  \oplus ...
			\oplus f(T^{n-1} (x) ))}\,d\mu(x)}{e^{-ntb}} .
\end{equation*}
Then
\[\limsup_{n\to+\infty}\frac{1}{n}\log\mu\{x |  - (f(x) \oplus  ...
\oplus f(T^{n-1} (x) ) \geq -b\} \leq c(-t) +tb.\]
Consequently
\[\limsup_{n\to+\infty}\frac{1}{n}\log\mu\{x |   (f(x) \oplus  ...
\oplus f(T^{n-1} (x) ) \leq b\} \leq \inf_{t\geq 0}[ c(-t) +tb].\]

\end{proof}

\begin{example}\label{ex: LDP} Let us suppose that $K=\{0,1\}^\mathbb{N}$ and $f:\{0,1\}^\mathbb{N}  \to \mathbb{R}$ is given by  $$f(x_1,x_2,x_3,...) =\left\{\begin{array}{cc} 0 & \text{if}\,\,\, x_1=0 \\ 1& \text{if}\,\,\, x_1=1\end{array}\right..$$ Let $T:\{0,1\}^\mathbb{N}\to\{0,1\}^\mathbb{N}$ be the shift map $\sigma$ and $\mu$ be the Bernoulli probability $(p,1-p)$, where $0<p<1$. This means that, for any cylinder set $[x_1,x_2,...,x_n]$, we have $\mu[x_1,x_2,...,x_n] = P_{x_1}\cdot P_{x_2}\cdots P_{x_n}$ where $P_{0}=p$ and $P_{1}=1-p$.
	
We remark that, for any $x\in\{0,1\}^\mathbb{N}$ we have that $(f(x) \oplus ...
\oplus f(\sigma^{n-1} (x) ))$ is equal to zero or one. Furthermore, it is zero iff $x\in [\underbrace{0,0,..,0}_{n}]$. Therefore, for any $0<b<1$ we have
\begin{align*}
	\lim_{n\to \infty} &\frac{1}{n}\log (\mu\{x |  (f(x) \oplus ...
	\oplus f(\sigma^{n-1} (x) ))\leq b \})\\ &= \lim_{n\to \infty} \frac{1}{n}\log (\mu[\underbrace{0,0,..,0}_{n}])=\lim_{n\to \infty}\frac{1}{n}\log(p^n)
	 =\log(p).\end{align*}

From now on we want to estimate the upper bound large deviation described in Proposition \ref{lape1} for $0<b<1$. As we will see, the right hand side of inequality \eqref{eq:ldp} does not match with $\log(p)$. Consequently, inequality \eqref{eq:ldp} can not be replaced by an equality.
In this way, initially we need to estimate $c(-t)$ for each $t\geq 0$.
For fixed $n\geq 0$ and $t\geq 0$ we have
\begin{align*}
	  \int &e^{n \,(-t)\, (f(x) \oplus f(\sigma(x)) \oplus  ...
			\oplus f(\sigma^{n-1} (x) ) } d \mu (x)\\
		&=  \int_{[1] } e^{- n \,t\, (f(x)  \oplus ...
		\oplus f(\sigma^{n-1} (x) ) } d \mu (x)\\
	&\,\,\,\,\,\,+\sum_{j=1}^{n-1} \int_{[\underbrace{0,0,..,0}_{j},\,1] } e^{- n \,t\, (f(x) \oplus ...
		\oplus f(\sigma^{n-1} (x) ) } d \mu (x)\\
	&\,\,\,\,\,\,+\int_{[\underbrace{0,0,..,0}_{n}] } e^{- n \,t\, (f(x) \oplus ...
		\oplus f(\sigma^{n-1} (x) ) } d \mu (x) \\
	&= e^{-nt}(1-p) +\sum_{j=1}^{n-1} e^{-n t}p^j(1-p) +  e^{ 0 }p^n \\
&=e^{- n t} (1-p)(\frac{1-p^n}{1-p})+ p^n\\
&= e^{- n t} (1-p^n)+ p^n .
\end{align*}
It follows that
\begin{align*}
c(-t)&=\limsup_{n\to+\infty} \frac{1}{n}\log(e^{- n t} (1-p^n)+ p^n)\\
&=\limsup_{n\to+\infty} \frac{1}{n}\log(e^{- n t} (1-p^n)+ e^{\log(p)n})=\max\{-t,\log(p)\}.\end{align*}
 Consequently, from inequality \eqref{eq:ldp} we get, for $0<b<1$,
\begin{align*}
	\limsup_{n\to \infty}& \frac{1}{n}\log (\mu\{x |  (f(x) \oplus ...
	\oplus f(\sigma^{n-1} (x) )\leq b \})\\
	& \leq \inf_{t\geq 0} [tb +\max\{-t,\log(p)\}]= \inf_{t\geq 0} [\max\{-t(1-b),tb+\log(p)\}] .
\end{align*}
The function $t\mapsto -t(1-b)$ is decreasing and the function $t\mapsto tb+\log(p)$ is increasing. Then, the above infimum is attained by $t$ satisfying $tb+\log(p) = -t(1-b)$, that is, $t=-\log(p)$. Therefore, inequality \eqref{eq:ldp} can be rewritten as
\[	\limsup_{n\to \infty} \frac{1}{n}\log (\mu\{x |  (f(x) \oplus ...
\oplus f(\sigma^{n-1} (x) )\leq b \}) \leq (1-b)\log(p).\]
On the other hand, as we saw above,
\[	\lim_{n\to \infty} \frac{1}{n}\log (\mu\{x |  (f(x) \oplus ...
\oplus f(\sigma^{n-1} (x) )\leq b \}) =\log(p),\]
being $\log(p)<(1-b)\log(p)$ because $\log(p)<0$.
\end{example}

\newcommand{\etalchar}[1]{$^{#1}$}

\end{document}